\newtheorem{thm}{Theorem}[section]
\newtheorem{lem}[thm]{Lemma}
\newtheorem{cor}[thm]{Corollary}
\newtheorem{pro}[thm]{Proposition}
\newtheorem{con}[thm]{Conjecture}
\newtheorem{prob}[thm]{Problem}
\theoremstyle{definition}
\newtheorem{remark}[thm]{Remark}
\long\def\delete#1{}
\begin{document}
\openup 0.5\jot

\title{Graphs with large maximum forcing number}
\author[1,3]{Qianqian Liu\thanks{E-mail: \texttt{liuqq2023@imut.edu.cn.}}}
\author[2]{Ajit A. Diwan}
\author[1]{Heping Zhang\footnote{The corresponding author. E-mail: \texttt{zhanghp@lzu.edu.cn.}}}

\affil[1]{\small School of Mathematics and Statistics, Lanzhou University, Lanzhou, Gansu 730000, China}
\affil[2]{\small Department of Computer Science and Engineering, Indian Institute of Technology Bombay, Powai, Mumbai 400076, India}
\affil[3]{\small College of Science, Inner Mongolia University of Technology, Hohhot, Inner Mongolia 010010, China}

\date{}

\maketitle

\setlength{\baselineskip}{20pt}
\noindent {\bf Abstract}:
For  a graph $G$ with  order $2n$ and  a perfect matching, let $f(G)$ and $F(G)$ denote the minimum and maximum forcing number of $G$ respectively. 
Then $0\leq f(G)\leq F(G)\leq n-1$.  Liu and Zhang \cite{liu} ever proposed a conjecture: $e(G)\geq \frac{n^2}{n-F(G)}$, where $e(G)$ denotes the number of edges of $G$. In this paper we confirm this conjecture and obtain $F(G)\leq n-\frac{n^2}{e(G)}$.
If $F(G)=n-1$, Liu and Zhang \cite{47} proved that  any two perfect matchings of $G$ can be obtained from each other by a series of matching switches along 4-cycles. 
If $G$ is bipartite and $F(G)\geq n-k$, $1\leq k\leq n-1$, we show that  any two perfect matchings of $G$ can be obtained from each other by a series of matching switches along even cycles of length at most $2(k+1)$. Finally, we ask whether  $f(G)\geq \lceil\frac{n}{k}\rceil-1$ holds for such bipartite graphs $G$, and give positive answers  for the  cases  $k=1,2$. Further we show all minimum forcing numbers of the bipartite graphs $G$ of order $2n$ and with
$F(G)=n-2$ form an integer interval $[\lfloor\frac{n}{2}\rfloor, n-2]$.

\vspace{2mm} \noindent{\it Keywords}: Perfect matching; Maximum forcing number; Bipartite graph; Matching switch
\vspace{2mm}

\noindent{AMS Subject Classification:} 05C70, 05C35

\section{\normalsize Introduction}

We consider only finite and simple graphs. Let $G$ be a graph with vertex set $V(G)$ and edge set $E(G)$. The \emph{order} of $G$ is the number of vertices in $G$.
A \emph{matching} $M$ in $G$ is an edge subset  such that no two edges of $M$  have an endvertex in common. The vertices incident to the edges of $M$ are called \emph{$M$}-\emph{saturated}, and the others are called \emph{$M$}-\emph{unsaturated}.
A \emph{perfect matching} of $G$ is a matching that saturates every vertex.

Let $G$ be a graph with a perfect matching $M$. A subset of $M$ is called a \emph{forcing set} of $M$ in $G$ if it is not contained in any other perfect matching of $G$. The smallest cardinality of a forcing set of $M$ is called the \emph{forcing number} of $M$, denoted by $f(G,M)$. The concept of forcing number was originally introduced by Klein and Randi\'{c} \cite{3} under the name of the innate degree of freedom, which plays an important role in resonance theory.
The \emph{minimum} and \emph{maximum} \emph{forcing numbers} of $G$ is the minimum and maximum values of forcing numbers over all perfect matchings of $G$, denoted by $f(G)$ and $F(G)$, respectively. Afshani et al. \cite{5} showed that computing the minimum forcing number is NP-complete for bipartite graphs with the maximum degree 4. However the complexity of computing the maximum forcing number of a graph is unknown. For  forcing numbers of perfect matchings of graphs and some related topics, readers may  refer to two surveys \cite{1,zhanghe}.

Let $G$ be a graph of order $2n$ and with a perfect matching. Then $0\leq f(G)\leq F(G)\leq n-1$.
Liu and Zhang \cite{liu} obtained a lower bound of  $e(G)$ by using maximum forcing number $F(G)$, where $e(G)$ denotes the number of edges of $G$. To improve the bound they proposed the following conjecture.


\begin{con} \cite{liu} \label{con}Let $G$ be a graph with order $2n$ and a perfect matching. Then
$$e(G)\geq \frac{n^2}{n-F(G)},  \text{ equivalently, }  F(G)\leq n-\frac{n^2}{e(G)}.$$
\end{con}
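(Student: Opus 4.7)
The plan is to give a randomized construction that, for any perfect matching $M$ of $G$, produces a subset $T \subseteq M$ of expected size at least $n^2/e(G)$ which is \emph{safe}, in the sense that $G[V(T)]$ has $T$ as its only perfect matching (equivalently, no $M$-alternating cycle of $G$ has all its $M$-edges in $T$). Since the maximum size of such a safe $T$ equals $n-f(G,M)$, applying the construction to a matching $M$ achieving $f(G,M)=F(G)$ yields $n-F(G)\ge n^2/e(G)$, which rearranges to the desired inequality.

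Fix $M=\{e_1,\dots,e_n\}$ with $e_i=u_iv_i$. Independently for each $i$, choose an endpoint $\beta_i\in\{u_i,v_i\}$ uniformly at random, and independently choose a uniformly random linear order $\sigma$ on $M$. Define
\[
T=\bigl\{e_i : \text{every non-}M\text{ edge } \beta_iy \text{ with } y\in V(e_j),\ j\neq i, \text{ satisfies } \sigma(e_j)>\sigma(e_i)\bigr\}.
\]
The main step is a combinatorial lemma showing that $T$ is always safe. Suppose toward a contradiction that $\{e_{i_1},\dots,e_{i_k}\}\subseteq T$ are the $M$-edges of some $M$-alternating cycle $C=e_{i_1}f_1e_{i_2}\cdots e_{i_k}f_ke_{i_1}$. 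Each $e_{i_j}$ has two endpoints, one entering $C$ and one exiting; if $\beta_{i_j}$ is the exit endpoint, then the non-$M$ edge $f_j\in C$ forces $\sigma(e_{i_{j+1}})>\sigma(e_{i_j})$, while if $\beta_{i_j}$ is the entry endpoint, then $f_{j-1}\in C$ forces $\sigma(e_{i_{j-1}})>\sigma(e_{i_j})$. These inequalities define a digraph $H$ on the cyclic set $\{1,\dots,k\}$ in which every vertex has out-degree exactly $1$; such a digraph must contain a directed cycle, and that cycle produces an impossible strict chain $\sigma(e_{i_{j_0}})<\sigma(e_{i_{j_1}})<\cdots<\sigma(e_{i_{j_0}})$. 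Hence no such $C$ can exist.

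It then remains to estimate $\mathbb{E}[|T|]$. For any $x\in\{u_i,v_i\}$, conditional on $\beta_i=x$ the event $e_i\in T$ is precisely the event that $e_i$ receives the smallest $\sigma$-value among itself and the (at most $\deg_G(x)-1$) distinct $M$-edges reached from $x$ by a non-$M$ edge, and therefore occurs with probability at least $1/\deg_G(x)$. Averaging over $\beta_i$ and summing over $i$ (each vertex of $G$ appears exactly once in the sum, as either $u_i$ or $v_i$) gives
\[
\mathbb{E}\bigl[|T|\bigr] \;\ge\; \tfrac{1}{2}\sum_{v\in V(G)}\frac{1}{\deg_G(v)} \;\ge\; \frac{(2n)^2}{2\cdot 2e(G)} \;=\; \frac{n^2}{e(G)},
\]
where the last inequality is the AM--HM (Cauchy--Schwarz) inequality applied to $\sum_v\deg_G(v)=2e(G)$. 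Thus some realization of $(\beta,\sigma)$ yields a safe $T$ with $|T|\ge n^2/e(G)$, which completes the proof. The step I anticipate as the main obstacle is the safety verification: controlling only non-$M$ edges at a single endpoint $\beta_i$ of each $e_i$ might a priori seem too weak to rule out every alternating cycle, and it is precisely the ``every-vertex-has-out-degree-one forces a directed cycle'' argument on the auxiliary digraph $H$ that makes the construction work---notably, uniformly in whether $G$ is bipartite or not.
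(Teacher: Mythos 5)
Your proof is correct, and it takes a genuinely different route from the paper's. The paper argues by induction on $k=n-F(G)$: it deletes the neighbourhood of a minimum-degree vertex together with the $M$-partners of those neighbours, uses Lemma \ref{2.1} to show the remaining graph $G'$ satisfies $F(G')\ge (n-t_d)-(k-1)$, and then minimizes the resulting quadratic in the number $t_d$ of deleted matching edges. You instead give a one-shot probabilistic (Caro--Wei/Tur\'an-type) construction producing, for \emph{every} perfect matching $M$, a forcing set of size at most $n-\tfrac12\sum_{v}1/d_G(v)\le n-n^2/e(G)$; your identification of ``safe'' sets $T$ with complements of forcing sets is exactly Lemma \ref{2.1} read through complementation, and the safety verification is sound -- each $M$-edge of a putative alternating cycle contributes one out-arc of the auxiliary functional digraph $H$, which must contain a directed cycle along which $\sigma$ strictly increases. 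The random choice of the watched endpoint $\beta_i$ is the key device that makes the argument work for non-bipartite graphs, where the standard digraph obtained by contracting $M$ (used in the paper's commented-out bipartite-only proof) is unavailable. What your approach buys is a cleaner, induction-free argument and the pointwise refinement $f(G,M)\le n-\tfrac12\sum_v 1/d_G(v)$ for every $M$; what the paper's approach buys is a purely elementary counting argument whose intermediate form ($F(G)\ge n-k$ implies $e(G)\ge n^2/k$) is the statement reused elsewhere. Both arguments are tight on disjoint unions of balanced complete bipartite graphs.
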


For some special cases $F(G)\leq \frac{n}{2}$ and $F(G)=1,2$, the conjecture was already confirmed. In this paper we completely confirm this conjecture.  Applying this bound on the $d$-dimensional hypercube $Q_d$, we obtain that $F(Q_d)\leq (1-\frac{1}{d})2^{d-1}$; see Section 3.

 Che and Chen \cite{1} proposed a problem: How to characterize the graphs $G$ with $f(G)=n-1$. For bipartite graphs, they \cite{10} obtained that $f(G)=n-1$ if and only if $G$ is complete bipartite graph $K_{n,n}$. Moreover,
Liu and Zhang \cite{47} solved the problem by obtaining the following result.

\begin{thm}\cite{47} Let $G$ be a graph of order $2n$. Then $f(G)=n-1$ if and only if $G$ is a complete multipartite graph with each partite set having size no more than $n$ or a graph obtained by adding arbitrary additional edges in one partite set to complete bipartite graph $K_{n,n}$.
\end{thm}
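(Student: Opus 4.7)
For the ``if'' direction, I would verify directly that both families satisfy the characterizing property that for every PM $M$ and every pair of edges $e_1,e_2\in M$, the four endpoints admit an alternative PM (equivalently, $M$ has an alternating $4$-cycle through $e_1,e_2$). For $G=K_{n_1,\dots,n_k}$ with $n_i\le n$, a short case analysis on how the four endpoints of $e_1,e_2$ are distributed among parts produces an alternative PM in every case. For $G=K_{n,n}$ plus arbitrary edges in one part $A$, every PM uses only the bipartite edges (because the independent part $B$ of size $n$ forces every $B$-vertex to be matched into $A$), so the standard bipartite swap $\{u_1v_1,u_2v_2\}\mapsto\{u_1v_2,u_2v_1\}$ is always available.

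For the ``only if'' direction, assume $f(G)=n-1$, so that $f(G,M)\ge n-1$ for every PM $M$ and every pair of $M$-edges lies in an $M$-alternating $4$-cycle. The plan starts with the \emph{Dominating-Edge Lemma}: every edge $uv$ in any PM of $G$ is a dominating edge, meaning $V(G)\setminus\{u,v\}\subseteq N(u)\cup N(v)$. The proof is one line: if some $w\notin N(u)\cup N(v)$ is matched by $ww'\in M$, the pair $\{uv,ww'\}$ admits no alternating $4$-cycle, since both candidate switches require an edge from $w$ to $u$ or $v$. Equivalently, whenever $xy,yz\notin E(G)$ and $xz\in E(G)$, the edge $xz$ lies in no PM.

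Next I would call $v\in V$ \emph{good} if its set $N^-(v)$ of non-neighbors is independent in $G$, and \emph{bad} otherwise, and split into two cases. If every vertex is good, then ``$x=y$ or $xy\notin E$'' is transitive (since $xy,yz\notin E$ with $y$ good places $x,z$ into the independent set $N^-(y)$, forcing $xz\notin E$) and hence an equivalence relation; its classes form the parts of a complete multipartite structure, with each part of size at most $n$ because $G$ has a PM. This produces the first family.

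The remaining case, that some vertex $v$ is bad, is where the main work lies. Picking $u,w\in N^-(v)$ with $uw\in E$, the Dominating-Edge Lemma forces $uw$ to lie in no PM, so $G-\{u,w\}$ has no PM. For any PM $M$ of $G$, letting $u',w',v^*$ be the $M$-partners of $u,w,v$, one deduces $u'w'\notin E$ (otherwise splicing in $u'w'$ would yield a PM of $G-\{u,w\}$), while the dominating property of $uu',ww',vv^*$ and the switchability of each pair among them together force $u',w'\in N(v)\cap N(u)\cap N(w)$ and a symmetric cross-adjacency pattern. The main obstacle is to globalize this local structure, i.e., to extract a partition $V=A\cup B$ with $|A|=|B|=n$, $B$ independent in $G$, and $K_{A,B}$ complete. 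My plan is to take $B$ to be a maximum independent set of $G$ and argue $|B|=n$ (an independent set of size $n$ is built inside $N(v)$ by collecting the partners $u',w'$ across varying PMs, while $|B|\le n$ follows from the existence of a PM), and then to verify via the Dominating-Edge Lemma that every vertex outside $B$ is adjacent to every vertex of $B$, exhibiting $G$ as $K_{n,n}$ plus arbitrary edges inside $A=V\setminus B$.
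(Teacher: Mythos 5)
The paper states this theorem without proof, citing it from \cite{47}, so there is no in-paper argument to compare against; I can only judge your proposal on its own terms. Your ``if'' direction, the reduction via Lemma \ref{2.1} to the condition that every pair of $M$-edges spans an $M$-alternating $4$-cycle, the Dominating-Edge Lemma, the good/bad dichotomy yielding the complete multipartite family when every vertex is good, and the local analysis around a bad vertex (including $u'w'\notin E$ and $u',w'\in N(u)\cap N(w)\cap N(v)$, and the final completeness check between $A$ and $B$ via the Dominating-Edge Lemma) are all correct.

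The genuine gap is exactly where you admit the main work lies: producing an independent set of size $n$ in the bad-vertex case. ``Collecting the partners $u',w'$ across varying PMs'' inside $N(v)$ does not work as stated, for two reasons. First, the non-adjacency you established holds only between the two partners $M(u)$ and $M(w)$ \emph{under the same} perfect matching; for two different perfect matchings $M_1,M_2$ you have no argument that, say, $M_1(u)$ and $M_2(u)$ are non-adjacent, so the collected set need not be independent. Second, nothing in the sketch bounds the size of this collection from below by $n$. A way to close the gap that stays within your framework: since $uw$ lies in no perfect matching, $G-u-w$ has no perfect matching, so Tutte's theorem together with a parity count gives a set $S\supseteq\{u,w\}$ with $o(G-S)=|S|\geq 2$; every perfect matching $M$ then matches exactly one vertex of each odd component of $G-S$ into $S$ and matches even components internally, so if some component of $G-S$ contained an $M$-edge $ab$, pairing $ab$ with an $M$-edge $c_js_j$ joining a different odd component to $S$ would require an alternating $4$-cycle using an edge from $\{a,b\}$ to $c_j$, which cannot exist across components of $G-S$. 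Hence every component of $G-S$ is a singleton, $|S|=n$, and $V(G)\setminus S$ is the independent set you need; your Dominating-Edge Lemma then finishes, as you observed.
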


However, it is difficult to characterize the graphs $G$ with order $2n$ and $F(G)=n-1$, a larger class of graphs. Some interesting properties have been  already revealed.

\begin{thm}\cite{47}\label{cont} If $G$ is a graph of order $2n$ and with $F(G)=n-1$, then the forcing spectrum (the set of forcing numbers of all perfect matchings) is continuous.
\end{thm}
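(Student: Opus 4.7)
The statement is that the set $\{f(G,M) : M \text{ is a perfect matching of } G\}$ is a contiguous integer interval, necessarily equal to $[f(G), n-1]$. I plan to fix a perfect matching $M^*$ achieving $f(G,M^*) = n-1$ and connect it to any other PM of $G$ by a sequence of single alternating-cycle switches, arguing that each switch changes the forcing number by at most $1$. Then every integer in $[f(G), n-1]$ must appear along a walk from $M^*$ to a PM realizing $f(G)$.

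The first step is a structural consequence of $f(G,M^*) = n-1$: for any two distinct edges $e_1, e_2 \in M^*$, the $(n-2)$-edge set $M^* \setminus \{e_1, e_2\}$ is not a forcing set of $M^*$, so some PM $M \ne M^*$ contains it, and $M \triangle M^*$ is then a single $M^*$-alternating $4$-cycle through both $e_1$ and $e_2$. Hence $M^*$ has many $4$-cycle neighbors in the PM exchange graph.

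The core technical ingredient I expect to need is a \emph{switch lemma}: if PMs $M, M'$ satisfy $M \triangle M' = C$ for a single alternating cycle $C$, then $|f(G,M) - f(G,M')| \le 1$. To prove $f(G,M') \le f(G,M)+1$, I would take a minimum forcing set $S$ of $M$ and a carefully chosen edge $e \in M' \cap E(C)$; then $S' := (S \setminus E(C)) \cup \{e\}$ should force $M'$, since any $M'$-alternating cycle $C'$ that avoids $E(C)$ is also $M$-alternating (hence meets $S \setminus E(C)$), while any $C'$ meeting $E(C)$ is blocked by $e$ for an appropriate choice. The reverse inequality follows by symmetry.

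Combining these, I walk from $M^*$ to any PM $M$ by switching the cycles of $M \triangle M^*$ one at a time; the switch lemma guarantees the forcing numbers change by at most $1$ per step, so every integer between $f(G,M^*) = n-1$ and $f(G,M)$ is attained by some intermediate PM. Choosing $M$ with $f(G,M) = f(G)$ then fills the interval $[f(G), n-1]$. The hard part will be the switch lemma---choosing $e$ so that \emph{every} $M'$-alternating cycle meeting $E(C)$ is blocked demands a careful case analysis, and if the clean form fails for long cycles I would fall back on restricting the walk to $4$-cycle switches (which the abundance observation at $M^*$ makes plentiful) and propagating by induction on $|M \triangle M^*|$, so that only the easier $4$-cycle case of the switch lemma is invoked.
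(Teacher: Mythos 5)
Your fallback plan is in fact the paper's actual route (via \cite{47} and \cite{5}): connect any two perfect matchings by matching $2$-switches, i.e.\ switches along $4$-cycles, and use the fact that a $2$-switch changes the forcing number by at most $1$. As written, however, the proposal has two genuine gaps. First, the general switch lemma you lead with --- $|f(G,M)-f(G,M')|\leq 1$ whenever $M\oplus M'$ is a single alternating cycle --- is false. If it held, the forcing spectrum of \emph{every} graph with a perfect matching would be continuous (decompose $M\oplus M'$ into disjoint alternating cycles and switch them one at a time, then apply the discrete intermediate value argument), contradicting the known existence of graphs with gapped forcing spectra. The sharp general bound is the one in Lemma \ref{mt} of this paper: a $2l$-cycle switch can move the forcing number by up to $l-1$, and the proof there must put all $l$ new matching edges of the cycle into the forcing set. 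Your $S'=(S\setminus E(C))\cup\{e\}$ with a single edge $e$ cannot block every $M'$-alternating cycle meeting $E(C)$ when $C$ is long, and no choice of $e$ repairs this; only the $l=2$ case survives.

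Second, once you retreat to $4$-cycle switches, the substantive step is the connectivity claim: that an \emph{arbitrary} perfect matching $M$ can be walked to $M^*$ using only $4$-cycle switches. Your structural observation (that $M^*\setminus\{e_1,e_2\}$ extends to a second perfect matching, forcing an $M^*$-alternating $4$-cycle through $e_1$ and $e_2$) only produces $4$-cycle neighbours of $M^*$ itself; it says nothing yet about the cycles of $M\oplus M^*$, which may all be long. The induction on $|M\oplus M^*|$ you gesture at is exactly where the work lies: one must locate chords on the long cycles of $M\oplus M^*$ and extract from them a short alternating cycle whose switch strictly increases $|M\cap M^*|$, as in the chord-hunting argument in the proof of Theorem \ref{qpp} for the bipartite analogue (and the non-bipartite case needs additional care, cf.\ Proposition \ref{rmk2}). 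Until that induction is carried out, the proof is incomplete.
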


\begin{thm}\cite{47} All minimum forcing numbers of the  graphs $G$ with order $2n$ and  $F(G)=n-1$ form an
integer interval $[\lfloor\frac{n}{2}\rfloor, n-1]$.
\end{thm}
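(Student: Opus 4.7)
The plan is to prove the statement in two directions: first that every graph $G$ of order $2n$ with $F(G)=n-1$ satisfies $\lfloor\frac{n}{2}\rfloor \leq f(G) \leq n-1$, and second that every integer in this interval is realized as $f(G)$ for some such graph. The upper bound $f(G) \leq n-1$ is trivial from $f(G) \leq F(G)$, and the value $n-1$ is witnessed by $K_{n,n}$ together with the Che--Chen characterization of graphs with $f(G) = n-1$. The substance of the proof lies in the lower bound and the construction of intermediate examples.

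For the lower bound $f(G) \geq \lfloor\frac{n}{2}\rfloor$, I would fix an arbitrary perfect matching $M$ of $G$ together with a minimum forcing set $S \subseteq M$ of size $f(G,M)$. Since $F(G)=n-1$, there exists a ``rigid'' matching $M^*$ with $f(G,M^*)=n-1$, and by the 4-cycle switch theorem of \cite{47} the matchings $M$ and $M^*$ are connected by a sequence of matching switches along 4-cycles. I would exploit this sequence, together with the symmetric difference $M \triangle M^*$ (which decomposes into vertex-disjoint $M$-alternating even cycles), to locate enough ``forcing obstructions'' within $M$ to force $|S| \geq \lfloor\frac{n}{2}\rfloor$. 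The continuity theorem (Theorem \ref{cont}) may be combined with the switch structure to control how the forcing number can change along consecutive matchings in the switching sequence, which restricts how small $f(G)$ can possibly be.

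For the construction part, the endpoint $k = \lfloor\frac{n}{2}\rfloor$ can be realized by a graph assembled from $\lfloor\frac{n}{2}\rfloor$ prism-like gadgets (the triangular prism $K_3 \square K_2$ already gives $F=2$, $f=1$ when $n=3$); intermediate values of $k$ come from hybrid graphs combining such prism gadgets with complete bipartite pieces, where each prism substitution decreases $f$ by one while preserving $F=n-1$. The main obstacle is the lower bound: the standard packing inequality $f(G,M) \geq c(M)$, with $c(M)$ the maximum number of vertex-disjoint $M$-alternating cycles, is far from tight in this setting (for example $c(M) = 1$ in $K_{n,n}$ with the diagonal matching, while $f(K_{n,n},M) = n-1$). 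Thus the argument must genuinely exploit the global rigidity imposed by $F(G)=n-1$, presumably via an inductive reduction on $n$ or a minimal counterexample analysis driven by the structure of the 4-cycle switch graph on perfect matchings.
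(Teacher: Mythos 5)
This theorem is quoted from \cite{47}; the present paper gives no proof of it, so your proposal can only be judged on its own terms, and as it stands it has two genuine gaps. The first is the lower bound $f(G)\geq\lfloor n/2\rfloor$, which you yourself identify as ``the main obstacle'' but never actually prove: ``locate enough forcing obstructions'' via the 4-cycle switch sequence is not an argument. The continuity theorem only tells you the spectrum is the interval $[f(G),n-1]$; it says nothing about where the left endpoint lies. Likewise, a 2-switch changes the forcing number by at most $1$, but there is no a priori bound on the number of switches between $M$ and the rigid matching $M^*$, so tracking the switch sequence gives no lower bound whatsoever. The tool that actually works here (and the one this paper uses for the analogous $F(G)=n-2$ statement in Section 5) is Diwan's Lemma~\ref{fps}: one shows that when $F(G)=n-1$, every induced subgraph with a unique perfect matching has at most $2\lceil n/2\rceil$ vertices, whence $f(G)\geq n-\lceil n/2\rceil=\lfloor n/2\rfloor$. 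The key structural input is Lemma~\ref{2.1} with $k=1$: for the matching $M^*$ with $f(G,M^*)=n-1$, every pair of $M^*$-edges spans an $M^*$-alternating 4-cycle, which forces $\delta(G)\geq n$ and makes such an argument possible. None of this appears in your sketch.

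The second gap is the realization of the intermediate values. Your prism-based construction does not survive the constraint $F(G)=n-1$: a disjoint union of $k$ triangular prisms has $F=2k=n-k$, not $n-1$, and any ``hybrid'' of prisms with complete bipartite pieces must be wired together densely enough that every pair of edges of some perfect matching spans an alternating 4-cycle (again forcing $\delta(G)\geq n$), at which point the claim that ``each prism substitution decreases $f$ by one while preserving $F=n-1$'' is exactly the assertion that needs proof and is given none. Note also that by Corollary~\ref{cor2.2} every example with $F(G)=n-1$ and $f(G)<n-1$ is necessarily non-bipartite and dense; the actual witnesses in \cite{47} are specific graphs $H_l$ (mentioned in the final remark of Section 5 of this paper) with $f(H_l)=n-l-1$ for $1\leq l\leq\frac{n-1}{2}$, which sweep out $[\lfloor n/2\rfloor,\,n-2]$ and, together with $K_{n,n}$, fill the whole interval. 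To repair your proposal you would need to replace both the switch-based lower bound and the prism construction with arguments along these lines.
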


To prove Theorem \ref{cont} they showed that any two perfect matchings of $G$ can be obtained from each other by applying a series of matching switches along 4-cycles.
In Sections 4 and 5 of this paper, we consider all bipartite graphs $G$ of order $2n$ and with $F(G)\geq n-k$ for $1\leq k\leq n$.
In section 4 we prove that any two perfect matchings of  $G$ can be obtained from each other by applying a series  of matching switches along even cycles of length at most $2(k+1)$. 
In Section 5, we propose a problem: does   $f(G)\geq \lceil\frac{n}{k}\rceil-1$ always hold?  We give positive answers  for the  cases  $k=1,2$. Further we show all minimum forcing numbers of the bipartite graphs $G$ of order $2n$ and with
$F(G)=n-2$ form an integer interval $[\lfloor\frac{n}{2}\rfloor, n-2]$.

\section{\normalsize Preliminary}
In this section, we give some notations and prove an important lemma.

For sets $A$ and $B$, the symmetric difference of $A$ and $B$ is defined as $A\oplus B:=(A\setminus B)\cup (B\setminus A)$.  For a perfect matching $M$ of a graph $G$, a path or cycle is \emph{$M$-alternating} if its edges appear alternately in $M$ and off $M$.
If $C$ is an $M$-alternating cycle of length $2l$ for $l\geq 2$, then the symmetric difference   $M\oplus E(C) $, is called a \emph{matching switch} on $M$ along $C$ of length $2l$ (simply {\it matching $l$-switch}), to get another perfect matching of $G$.

The \emph{degree} of a vertex $v$ in $G$, denoted by $d_G(v)$, is the number of edges incident to $v$.
The minimum and maximum degree of $G$ are denoted by $\delta(G)$ and $\Delta(G)$, respectively. The \emph{neighborhood} of $v$, denoted by $N_G(v)$, is the set of vertices adjacent to $v$. We denote a cycle with $l(\geq 3)$ vertices by $C_l$. A graph $G$ is called \emph{$C_l$-free} if $G$ contains no induced subgraphs isomorphic to $C_l$. A \emph{chord} of a cycle $C$ is an edge not in $C$ whose endvertices lie in $C$.

For a  vertex subset $S$ of $G$, $G[S]$ denotes the subgraph of $G$ \emph{induced by} $S$ whose vertex set is $S$ and edge set consisting of all edges of $G$ with both endvertices in $S$.
In a word, $G[S]$ is the subgraph $G-V(G)\setminus S$, obtained from $G$ by deleting all vertices in $V(G)\setminus S$ and their incident edges.
For an edge subset $T$ of $G$, we use $V(T)$ to denote the set of all endvertices of edges in $T$.
For two vertex subsets $V_1$ and $V_2$ of $G$, we denote by $E(V_1,V_2)$ the set of edges of $G$ with one endvertex in $V_1$ and the other in $V_2$.

By the definition of a forcing set, we obtain the following lemma.
\begin{lem}\label{2.1}Let $G$ be a graph of order $2n$ for $1\leq k\leq n$ and with a perfect matching $M$. Then $f(G,M)\geq n-k$ if and only if $G[V(T)]$ contains an
$M$-alternating cycle for any subset $T$ of $M$ with size at least $k+1$.
\end{lem}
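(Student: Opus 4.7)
The plan is to unwind the definitions and then invoke the standard fact that the symmetric difference of two distinct perfect matchings of $G$ decomposes into vertex-disjoint even $M$-alternating cycles. First I would note that $f(G,M)\geq n-k$ is equivalent to the statement that no subset $S\subseteq M$ with $|S|\leq n-k-1$ is a forcing set of $M$. Writing $T:=M\setminus S$, so $|T|\geq k+1$, this becomes: for every $T\subseteq M$ with $|T|\geq k+1$, there exists a perfect matching $M'$ of $G$ with $M'\neq M$ and $M\setminus T\subseteq M'$. So the task reduces to matching up such $M'$ with $M$-alternating cycles in $G[V(T)]$.

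The key step is precisely this dictionary. In the forward direction, given an $M'$ as above, set $D:=M\oplus M'$, a nonempty disjoint union of even $M$-alternating cycles. Since $M\setminus M'\subseteq T$, every $M$-edge appearing in $D$ lies in $T$. For any component cycle $C$ of $D$, each vertex $v\in V(C)$ is incident to a unique $M$-edge, namely the one of $C$ at $v$, and this edge belongs to $T$; hence $v\in V(T)$, so $V(C)\subseteq V(T)$ and $C$ is an $M$-alternating cycle of the induced subgraph $G[V(T)]$. In the reverse direction, given any $M$-alternating cycle $C$ in $G[V(T)]$, the matching switch $M':=M\oplus E(C)$ is a perfect matching distinct from $M$; since $V(C)\subseteq V(T)$, the $M$-edges of $C$ lie in $T$, which gives $M\setminus M'\subseteq T$ and therefore $M\setminus T\subseteq M\cap M'\subseteq M'$, confirming that $M\setminus T$ is not a forcing set.

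Combining the two directions yields the claimed equivalence. I do not anticipate any genuine obstacle: the proof is entirely formal once one observes that, because $T\subseteq M$ and $M$ is a perfect matching, the unique $M$-edge at a vertex $v\in V(T)$ is exactly the edge of $T$ containing $v$. The only minor bookkeeping is to swap consistently between a forcing subset $S\subseteq M$ and its complement $T=M\setminus S$, and to note that the cycle $C$ automatically has length at least four because $G$ is a simple graph, so the matching switch really does produce a distinct perfect matching.
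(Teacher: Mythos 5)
Your proof is correct and follows essentially the same route as the paper: both arguments reduce $f(G,M)\geq n-k$ to the non-forcing of the complements $M\setminus T$ and then pass between a second perfect matching containing $M\setminus T$ and an $M$-alternating cycle in $G[V(T)]$ via the symmetric-difference decomposition. Your version just makes explicit the bookkeeping (that the $M$-edges of the relevant cycles lie in $T$, hence the cycles live in the induced subgraph $G[V(T)]$) that the paper leaves implicit.
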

\begin{proof}The lemma is trivial for $n=k$. Let $n\geq k+1$. If $f(G,M)\geq n-k$, then $M\setminus T$ is not a forcing set of $M$ for any subset $T$ of $M$ with $|T|\geq k+1$. By the definition, $M\setminus T$ is contained in at least two perfect matchings of $G$, and $G[V(T)]$ has at least two perfect matchings. Thus $G[V(T)]$ contains an $M$-alternating cycle. Conversely, for any subset $T$ of $M$ with size less than $n-k$, $|M\setminus T|\geq k+1$. By the assumption, $G[V(M\setminus T)]$ contains an $M$-alternating cycle and has at least two perfect matchings. Thus $T$ is not a forcing set of $M$, and $f(G,M)\geq n-k$.
\end{proof}

Let $G$ be a bipartite graph of order $2n$ and with $F(G)=n-1$. Then there is a perfect matching $M=\{u_iv_i|1\leq i\leq n\}$ of $G$ with $f(G,M)=n-1$. By Lemma \ref{2.1}, $G[\{u_i,v_i,u_j,v_j\}]$ contains an $M$-alternating cycle for $1\leq i< j\leq n$. That is to say, $\{u_iv_j,v_iu_j\}\subseteq E(G)$. Thus, $G$ is complete bipartite graph $K_{n,n}$. Hence, we obtain the following result.
\begin{cor}\label{cor2.2} Let $G$ be a bipartite graph of order $2n$. Then $F(G)=n-1$ if and only if $G$ is  complete bipartite graph $K_{n,n}$.
\end{cor}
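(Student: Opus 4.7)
The plan is to prove both implications using Lemma \ref{2.1} with $k = 1$, which says that $f(G,M) \geq n-1$ holds precisely when every two-edge subset of $M$ spans an $M$-alternating cycle; bipartiteness will then pin down exactly which chord edges must be present.

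For the forward direction, I would pick a perfect matching $M = \{u_iv_i : 1 \leq i \leq n\}$ achieving $f(G,M) = F(G) = n-1$. Because every edge of $M$ straddles the bipartition of $G$, after relabeling I may assume the bipartition is $(\{u_1,\ldots,u_n\}, \{v_1,\ldots,v_n\})$. For any pair $i < j$, setting $T = \{u_iv_i, u_jv_j\}$ in Lemma \ref{2.1} yields an $M$-alternating cycle inside $G[\{u_i,v_i,u_j,v_j\}]$; on four vertices this must be a 4-cycle containing the two $M$-edges. The remaining two edges of the cycle must go between the two parts, since the same-part candidates $u_iu_j$ and $v_iv_j$ are excluded by bipartiteness, so $\{u_iv_j, u_jv_i\} \subseteq E(G)$. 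Letting $i, j$ range over all pairs shows every cross edge is present, and hence $G = K_{n,n}$.

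For the reverse direction, take $G = K_{n,n}$ with bipartition $(\{u_1,\ldots,u_n\}, \{v_1,\ldots,v_n\})$ and consider the perfect matching $M = \{u_iv_i : 1 \leq i \leq n\}$. For any $T \subseteq M$ with $|T| \geq 2$, the induced subgraph $G[V(T)]$ is itself a complete bipartite graph $K_{|T|,|T|}$; picking any two edges $u_iv_i, u_jv_j \in T$ produces the $M$-alternating 4-cycle $u_iv_iu_jv_ju_i$ inside $G[V(T)]$. Lemma \ref{2.1} therefore gives $f(G,M) \geq n-1$, and combined with the universal bound $F(G) \leq n-1$ this forces $F(G) = n-1$.

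I do not expect a genuine obstacle: the whole argument is essentially a one-line consequence of Lemma \ref{2.1} once the alternating cycle on four vertices is identified as a 4-cycle and bipartiteness rules out the two same-part chords. The only bookkeeping care needed is aligning the labeling $M = \{u_iv_i\}$ with a fixed bipartition $(\{u_i\},\{v_i\})$, which is legitimate precisely because $M$ is a matching in a bipartite graph.
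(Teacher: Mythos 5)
Your proof is correct and follows essentially the same route as the paper: apply Lemma \ref{2.1} with $k=1$ to each pair $\{u_iv_i,u_jv_j\}$, observe that bipartiteness forces the alternating cycle on four vertices to be $u_iv_iu_jv_ju_i$, and conclude $\{u_iv_j,u_jv_i\}\subseteq E(G)$. The only difference is that you spell out the (immediate) converse for $K_{n,n}$, which the paper leaves implicit.
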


\section{\normalsize A solution to Conjecture \ref{con}}

To confirm Conjecture \ref{con} we obtain  the following result.
\begin{thm} \label{edgesthm}
Let $G$ be a graph of order $2n$ and with a perfect matching. If $F(G)\geq n-k$ where $1\leq k\leq n$, then $e(G)\geq \frac{n^2}{k}$.
\end{thm}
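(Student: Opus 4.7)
The plan is to prove the theorem by induction on $n$, after translating the hypothesis through Lemma \ref{2.1}. Fix a perfect matching $M = \{u_iv_i : 1 \leq i \leq n\}$ with $f(G,M) = F(G) \geq n-k$, and call a subset $T \subseteq M$ \emph{rigid} if $T$ is the unique perfect matching of $G[V(T)]$. Lemma \ref{2.1} says exactly that the hypothesis is equivalent to: every rigid subset of $M$ has size at most $k$. The base case $n \leq k$ is immediate, since $e(G) \geq n \geq n^2/k$ from the perfect matching alone.

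For the inductive step I would choose a rigid subset $A \subseteq M$ of \emph{maximum} cardinality $\alpha$ (note $1 \leq \alpha \leq k$, since singletons are always rigid and the maximum rigid size is bounded by $k$) and set $B := M \setminus A$ with $|B| = n - \alpha$. The induction is to be applied to $G[V(B)]$, which has order $2|B| < 2n$, carries $B$ as a perfect matching, and inherits the forcing hypothesis, because any rigid subset of $B$ inside $G[V(B)]$ is already rigid in $G$ and hence has size $\leq k$.

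The heart of the argument is the structural claim that for each $e_j = u_jv_j \in B$, at least two edges of $G$ join $V(e_j)$ to $V(A)$. To see this, note that by the maximality of $A$ the enlarged set $A \cup \{e_j\}$ is not rigid, so $G[V(A)\cup\{u_j,v_j\}]$ admits a perfect matching $M' \neq A \cup \{e_j\}$. Rigidity of $A$ forces $e_j \notin M'$, because otherwise $M'\setminus \{e_j\}$ would be a second perfect matching of $G[V(A)]$, contradicting that $A$ is rigid. Therefore both $u_j$ and $v_j$ must be $M'$-matched to two distinct vertices of $V(A)$, producing the required pair of cross edges. Summing over $e_j \in B$ gives $|E(V(A),V(B))| \geq 2|B|$.

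Combining the matching edges inside $V(A)$, the inductive bound $e(G[V(B)]) \geq |B|^2/k$, and the cross-edge bound, one obtains
\[
e(G) \;\geq\; \alpha + \frac{(n-\alpha)^2}{k} + 2(n-\alpha);
\]
substituting $y := n-\alpha$ reduces the target $e(G) \geq n^2/k$ to the inequality $y^2 + ky \geq n(n-k)$ for $y \geq n-k$, which is immediate since the left side is increasing in $y \geq 0$ and equals $n(n-k)$ at $y = n-k$. The main obstacle I expect is the structural claim, where both the rigidity and the maximality of $A$ are used simultaneously to extract two distinct cross edges per $e_j \in B$; everything after that is routine algebra.
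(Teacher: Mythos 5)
Your proof is correct, and it takes a genuinely different route from the paper's. Both arguments begin from Lemma \ref{2.1} (your notion of a ``rigid'' subset of $M$ is exactly its reformulation: $f(G,M)\geq n-k$ iff every rigid $T\subseteq M$ has $|T|\leq k$), but the paper inducts on $k$: it takes a vertex $w$ of minimum degree $d$, deletes $N_G(w)$ together with the $M$-partners of these neighbours, shows the remaining graph inherits the hypothesis with parameter $k-1$ (the alternating cycle from Lemma \ref{2.1} must avoid the matching edge at $w$ because $w$ has degree $1$ in the relevant induced subgraph), and then finishes with a two-case analysis and the minimization of the quadratic $\frac{k}{k-1}t_d^2-\frac{2n}{k-1}t_d+\frac{n^2}{k-1}$ at $t_d=\frac{n}{k}$, anchored in the base case $k=1$ where $\delta(G)\geq n$. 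You instead fix $k$, induct on $n$, and delete a maximum rigid set $A\subseteq M$; your key structural step---maximality plus rigidity of $A$ force each remaining matching edge to send at least two edges into $V(A)$---replaces the paper's degree counting, the inherited hypothesis keeps the same $k$, and the final estimate reduces to the monotone inequality $y^2+ky\geq n(n-k)$ for $y\geq n-k$ rather than a quadratic optimization (with the trivial case $n\leq k$ serving as the base). Your decomposition avoids the case distinction and the bookkeeping with $t_d\leq d$, and it makes the extremal structure transparent: the rigid set $A$ plays the role of one of the $k$ blocks in the sharp examples of Remark \ref{rmk3} (disjoint copies of $K_{n/k,n/k}$), where your count $\alpha+\frac{(n-\alpha)^2}{k}+2(n-\alpha)$ is attained with equality block by block. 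Both proofs work for general (not necessarily bipartite) graphs, so yours is a complete alternative argument rather than a variant of the paper's.
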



\begin{proof}Let $M$ be a perfect matching of $G$ with $f(G,M)=F(G)\geq n-k$.
We will proceed by induction on $k$. For $k=1$, by Lemma \ref{2.1}, the induced subgraph by all endvertices of any two edges of $M$ contains an $M$-alternating cycle. That is, there are at least two edges not in $M$ between the endvertices of the two edges of $M$. Thus, $\delta(G)\geq n$ and $e(G)\geq n^2$.

For $k\geq 2$, let $w$ be a vertex of $G$ with the minimum degree $d$. Let $$G':= G- N_G(w)\cup\{s|st\in M, t\in N_G(w)\}, M'=M\cap E(G'),\text{ and } t_d=n-|M'|.$$
Then $M'$ is a perfect matching of graph $G'$ that has $2(n-t_d)$ vertices, and $t_d$ equals the number of edges in $M$ incident with a vertex in $N(w)$. So $t_d\leq d$, and the equality holds if $G$ is bipartite.

Next we consider the following two cases.
If $n-t_d\leq k-1$, then $t_d\geq n-k+1$. By Degree-Sum formula, we have $$e(G)\geq nd\geq nt_d\geq n(n-k+1).$$
Since $n\geq k$ and $k\geq 1$, we have $n-k\geq 0$ and $1-\frac{1}{k}\geq 0$. Thus
\begin{eqnarray*}
n(n-k+1)-\frac{n^2}{k}&\geq&n(n-k+1-\frac{n}{k})\\
            &\geq&n[n(1-\frac{1}{k})-k(1-\frac{1}{k})]\\
            &\geq&n(n-k)(1-\frac{1}{k})\geq 0.
\end{eqnarray*}

If $n-t_d\geq k$,  we will prove that $f(G',M')\geq (n-t_d)-(k-1)$. Let $S'$ be a subset of $M'$ with size $k$ and $e$ be an edge of $M$ incident to $w$. By Lemma \ref{2.1}, $G[V(S'\cup \{e\})]$ contains an $M$-alternating cycle. Since $w$ is of degree 1 in $G[V(S'\cup \{e\})]$, $e$ is not contained in this $M$-alternating cycle. Thus this $M$-alternating cycle is entirely contained in $G'[V(S')]$. By Lemma \ref{2.1}, $F(G')\geq f(G',M')\geq (n-t_d)-(k-1)$. By the induction hypothesis, $e(G')\geq \frac{(n-t_d)^2}{k-1}$. Since there are at least $dt_d$ edges in $G$ but not in $G'$, $$e(G)\geq dt_d+\frac{(n-t_d)^2}{k-1}\geq t_d^2+\frac{(n-t_d)^2}{k-1}=\frac{k}{k-1}t_d^2-\frac{2n}{k-1}t_d+\frac{n^2}{k-1},$$ which is a quadratic function of $t_d$ that reaches the minimum value $\frac{n^2}{k}$ at $t_d=\frac{n}{k}$.
\end{proof}

The following corollary shows that  Conjecture \ref{con} is true, which gives an upper bound of the maximum forcing number.
\begin{cor} \label{corhy}
Let $G$ be a graph of order $2n$ and with a perfect matching. Then $$F(G)\leq n-\frac{n^2}{e(G)}.$$
\end{cor}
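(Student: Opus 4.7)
The plan is to deduce Corollary \ref{corhy} directly from Theorem \ref{edgesthm} by a one-line algebraic rearrangement, with the only work being to verify the hypothesis on the index range.

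First, I would set $k := n - F(G)$. Since $G$ has a perfect matching and order $2n$, we have the standard bounds $0 \leq F(G) \leq n-1$, which translate to $1 \leq k \leq n$, matching exactly the range of $k$ allowed in Theorem \ref{edgesthm}. By construction $F(G) = n-k \geq n-k$, so the hypothesis of that theorem is satisfied.

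Next, I would apply Theorem \ref{edgesthm} to conclude
\begin{equation*}
e(G) \geq \frac{n^2}{k} = \frac{n^2}{n - F(G)}.
\end{equation*}
Since $F(G) \leq n-1 < n$, the denominator $n - F(G)$ is a positive integer, and $e(G) > 0$, so both sides are positive and we may rearrange to obtain
\begin{equation*}
n - F(G) \geq \frac{n^2}{e(G)}, \qquad \text{i.e.,} \qquad F(G) \leq n - \frac{n^2}{e(G)},
\end{equation*}
which is the desired inequality.

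There is essentially no obstacle here: the corollary is a cosmetic reformulation of the theorem, and the only thing to check is that $k = n - F(G)$ lands in the admissible range $\{1,2,\dots,n\}$, which is immediate from $0 \leq F(G) \leq n-1$. The real content of the conjecture is carried by Theorem \ref{edgesthm}; this corollary merely packages the inequality in the form originally proposed by Liu and Zhang \cite{liu}.
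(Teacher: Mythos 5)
Your proposal is correct and follows exactly the same route as the paper: set $k := n - F(G)$, observe $1 \leq k \leq n$ from $0 \leq F(G) \leq n-1$, apply Theorem \ref{edgesthm} to get $e(G) \geq \frac{n^2}{n-F(G)}$, and rearrange. Nothing further is needed.
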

\begin{proof}Since $G$ has a perfect matching, we have $0\leq F(G)\leq n-1$. That is, $1\leq n-F(G)\leq n$. Letting $k:=n-F(G)$, we have $F(G)=n-k$. By Theorem \ref{edgesthm} we have $e(G)\geq \frac{n^2}{k}=\frac{n^2}{n-F(G)}$. Equivalently, we can have that $F(G)\leq n-\frac{n^2}{e(G)}.$
\end{proof}

\begin{remark}\label{rmk3} The   bounds in Theorem  \ref{edgesthm} and Corollary \ref{corhy} are sharp. Let $n,k$ be two positive integers such that $n\equiv 0$ (mod $k$). Let $G'$ be a minimal graph of order $\frac{2n}{k}$ and with $F(G')=\frac{n}{k}-1$. Then $G'$ is $\frac{n}{k}$-regular by Lemma 2.3 (ii) in \cite{47}.  Let $G$ be the disjoint union of $k$ copies of $G'$. Then $F(G)=\sum_{i=1}^kF(G')=n-k$ and $G$ has exactly $\frac{n^2}{k}$ edges. Especially, we can choose $G'$ as complete bipartite graph $K_{\frac{n}{k},\frac{n}{k}}.$
\end{remark}

\begin{cor}\label{corhyy}
Let $G$ be a $r$-regular graph  with a perfect matching. Then $$F(G)\leq \frac{r-1}{2r}|V(G)|.$$
\end{cor}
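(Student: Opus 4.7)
The plan is to read off this corollary as a direct substitution into Corollary \ref{corhy}. Write $|V(G)|=2n$, so that $G$ has a perfect matching on $2n$ vertices. Since $G$ is $r$-regular, the handshake lemma gives
\[
e(G) \;=\; \frac{1}{2}\sum_{v\in V(G)} d_G(v) \;=\; \frac{r\cdot 2n}{2} \;=\; rn.
\]
Plugging this into the bound $F(G)\le n-\dfrac{n^2}{e(G)}$ from Corollary \ref{corhy} yields
\[
F(G) \;\le\; n-\frac{n^2}{rn} \;=\; n-\frac{n}{r} \;=\; \frac{(r-1)n}{r} \;=\; \frac{r-1}{2r}\,|V(G)|,
\]
which is exactly the claimed inequality.

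There is essentially no obstacle: the only things to verify are that $G$ actually has a perfect matching (assumed in the hypothesis, so Corollary \ref{corhy} applies), and that the edge count of an $r$-regular graph on $2n$ vertices is $rn$. No induction or case analysis is needed, and the bound is sharp in the same regime as Corollary \ref{corhy}: taking $G'=K_{r,r}$ (which is $r$-regular with $F(K_{r,r})=r-1$) and forming disjoint unions as in Remark \ref{rmk3} shows equality can be attained whenever $r$ divides $n$.
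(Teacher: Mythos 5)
Your proof is correct and follows exactly the same route as the paper: compute $e(G)=\frac{r|V(G)|}{2}=rn$ by the handshake lemma and substitute into Corollary \ref{corhy}. The added sharpness observation via disjoint copies of $K_{r,r}$ is a nice (and accurate) bonus, consistent with Remark \ref{rmk3}.
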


\begin{proof}
Since $G$ is $r$-regular,  $e(G)=\frac{r|V(G)|}{2}$. By Corollary \ref{corhy}, we have
 $$F(G)\leq \frac{|V(G)|}{2}(1-\frac{1}{r})=\frac{r-1}{2r}|V(G)|.$$
\end{proof}
Let $Q_d$ be a $d$-dimensional hypercube where $d\geq 1$.  We can immediately obtain an upper bound of $F(Q_d)$ by Corollary \ref{corhy} or \ref{corhyy}.
\begin{cor}
For $d\geq 1$, $$F(Q_d)\leq (1-\frac{1}{d})2^{d-1}.$$
\end{cor}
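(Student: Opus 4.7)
The plan is to invoke Corollary \ref{corhyy} (or equivalently Corollary \ref{corhy}) as a black box, since the hypercube $Q_d$ fits its hypotheses perfectly. First I would recall the two basic structural facts about $Q_d$ that the corollary needs as input: $Q_d$ has $|V(Q_d)|=2^d$ vertices, $Q_d$ is $d$-regular, and $Q_d$ admits a perfect matching (for instance, pair each vertex $x\in\{0,1\}^d$ with $x\oplus e_1$, where $e_1$ is the first standard basis vector; this is literally a $1$-factor of the graph obtained by fixing the first coordinate). All three facts are standard and do not require argument beyond citation.

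Next I would simply substitute $r=d$ and $|V(G)|=2^d$ into the inequality $F(G)\leq \frac{r-1}{2r}|V(G)|$ of Corollary \ref{corhyy}. This yields $F(Q_d)\leq \frac{d-1}{2d}\cdot 2^d = \frac{d-1}{d}\cdot 2^{d-1} = (1-\frac{1}{d})\,2^{d-1}$, which is exactly the desired bound. If one prefers to route through Corollary \ref{corhy} instead, one sets $n:=2^{d-1}$ so that $|V(Q_d)|=2n$, computes $e(Q_d)=\frac{d\cdot 2^d}{2}=d\cdot 2^{d-1}$ by the handshake lemma, and obtains $F(Q_d)\leq n-\frac{n^2}{e(Q_d)} = 2^{d-1}-\frac{2^{2d-2}}{d\cdot 2^{d-1}} = 2^{d-1}-\frac{2^{d-1}}{d}$, giving the same expression.

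There is essentially no obstacle: the corollary has already done all the combinatorial work, so the proof reduces to verifying the hypotheses and performing an arithmetic substitution. The only small subtlety worth mentioning explicitly is that $d\geq 1$ is needed so that $Q_d$ is connected and $d$-regular (and so that the $\frac{1}{d}$ in the denominator makes sense); when $d=1$ the bound degenerates to $F(Q_1)\leq 0$, which is consistent with $Q_1=K_2$ having a unique perfect matching with forcing number $0$.
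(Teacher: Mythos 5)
Your proposal is correct and matches the paper's approach exactly: the paper states the bound follows immediately from Corollary \ref{corhy} or \ref{corhyy} applied to the $d$-regular graph $Q_d$ with $2^d$ vertices, which is precisely the substitution you carry out. The extra remarks on the existence of a perfect matching of $Q_d$ and the $d=1$ degenerate case are sound, though the paper leaves them implicit.
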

Alon ever showed that $F(Q_d)>c2^{d-1}$ for any constant $0<c<1$ and sufficiently large $d$ (see \cite{7}). Using Alon's method,
Adams et al. \cite{4}  obtained a lower bound in a more general sense as follows.
$$F(Q_d)\geq (1-\frac{\log(2e)}{\log d})2^{d-1}, d\geq 2.$$

Combining  the above lower and upper bounds on $F(Q_d)$ we have
 $$(1-\frac{\log(2e)}{\log d})2^{d-1}\leq F(Q_d)\leq (1-\frac{1}{d})2^{d-1}, d\geq 2,$$

\noindent which implies  an asymptotic behavior for the maximum forcing number $F(Q_d)$ as follows. \begin{cor} $F(Q_d)\sim 2^{d-1}$ as $d$ approaches to $\infty$.
\end{cor}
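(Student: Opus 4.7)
The plan is to derive this as an immediate consequence of the two-sided bound displayed just above the statement, via the squeeze theorem. By definition, $F(Q_d)\sim 2^{d-1}$ means $\lim_{d\to\infty} F(Q_d)/2^{d-1}=1$, so it suffices to show that both sides of the sandwich
\[
1-\frac{\log(2e)}{\log d}\;\leq\;\frac{F(Q_d)}{2^{d-1}}\;\leq\;1-\frac{1}{d}
\]
tend to $1$ as $d\to\infty$.

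First I would divide the bound proved by Adams et al.\ (lower) and the bound from Corollary \ref{corhy} (upper) through by $2^{d-1}$ to obtain the displayed inequality. Next I would observe that the left-hand side tends to $1$ because $\log(2e)$ is a fixed constant while $\log d\to\infty$, hence $\log(2e)/\log d\to 0$; similarly the right-hand side tends to $1$ since $1/d\to 0$. The squeeze theorem then yields $\lim_{d\to\infty}F(Q_d)/2^{d-1}=1$, which is precisely the asymptotic claim.

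There is no real obstacle: both bounds used are already stated in the excerpt, and the argument is a one-line application of the squeeze theorem. The only care needed is to note that $F(Q_d)\leq 2^{d-1}-1$ for all $d\geq 1$ (since $Q_d$ has a perfect matching and $|V(Q_d)|/2=2^{d-1}$), so the ratio $F(Q_d)/2^{d-1}$ is well-defined and bounded above by $1$, making the convergence to $1$ unambiguous.
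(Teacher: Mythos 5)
Your proposal is correct and is exactly the paper's argument: the paper also sandwiches $F(Q_d)/2^{d-1}$ between $1-\frac{\log(2e)}{\log d}$ and $1-\frac{1}{d}$ (the Adams et al.\ lower bound and the upper bound from Corollary \ref{corhy}) and lets $d\to\infty$. Nothing further is needed.
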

\section{\normalsize Switches of perfect matchings}
From this section we consider the bipartite graphs $G$ of order $2n$ and with $F(G)= n-k$ where $1\leq k\leq n$. If $k=n$, then $F(G)=0$ and $G$ has a unique perfect matching. The following theorem is our main result of this section.

\begin{thm}\label{qpp} Let $G$ be a bipartite graph of order $2n$ and with $F(G)\geq n-k$ for $1\leq k\leq n$. Then any two perfect matchings of $G$ can be obtained from each other by a finite sequence of matching 2-, 3-, \dots, or $(k+1)$-switches.
\end{thm}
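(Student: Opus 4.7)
Fix a perfect matching $M^*$ achieving $f(G,M^*)=F(G)\ge n-k$, guaranteed by the hypothesis. Since matching switches are reversible and compose, it suffices to show that every perfect matching $M$ of $G$ can be transformed into $M^*$ by a finite sequence of matching $j$-switches with $2\le j\le k+1$; the general statement for arbitrary $M_1,M_2$ then follows by routing through $M^*$.

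I would proceed by strong induction on $|M\oplus M^*|$, with the base case $M=M^*$ trivial. In the inductive step, decompose $M\oplus M^*$ into its disjoint $M^*$-alternating even cycles. If some such cycle has length at most $2(k+1)$, switching on it (valid, as it is simultaneously $M$- and $M^*$-alternating) produces a matching with strictly smaller symmetric difference from $M^*$, and the induction hypothesis applies. Otherwise every cycle of $M\oplus M^*$ has length at least $2(k+2)$. Choose such a cycle $C$ and let $T\subset M^*\cap E(C)$ consist of $k+1$ \emph{consecutive} $M^*$-edges of $C$, so that $V(T)$ is a path of $2(k+1)$ consecutive vertices on $C$. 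Lemma \ref{2.1} then produces an $M^*$-alternating cycle of length at most $2(k+1)$ in $G[V(T)]$.

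To translate this into induction-friendly progress, contract each $M^*$-edge of $T$ to a single vertex to obtain a multigraph $H$ on $k+1$ vertices $u_0,\ldots,u_k$; the $M$-edges of $C$ inside $V(T)$ form a Hamiltonian path $u_0u_1\cdots u_k$ in $H$, and the remaining edges of $H$ correspond to chords of $C$ within $G[V(T)]$. The existence of an alternating cycle in $G[V(T)]$ forces at least one chord-edge $u_iu_j$ of $H$ (say $i<j$); lifting the cycle $u_iu_{i+1}\cdots u_ju_i$ gives an $M^*$-alternating cycle $D$ in $G$ of length $2(j-i+1)\le 2(k+1)$. Crucially, any $M$-edge with both endpoints in $V(T)\subseteq V(C)$ is forced to lie on $C$, so chords of $C$ inside $G[V(T)]$ belong to neither $M$ nor $M^*$, and hence not to $M\oplus M^*$. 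Of the $2(j-i+1)$ edges of $D$, exactly $2(j-i)+1$ lie on $C$ (so in $M\oplus M^*$), while the single chord does not. Setting $M^{**}=M^*\oplus E(D)$ and using $|M\oplus M^{**}|=|M\oplus M^*|+|E(D)|-2|E(D)\cap(M\oplus M^*)|$ yields $|M\oplus M^{**}|=|M\oplus M^*|-2(j-i)<|M\oplus M^*|$. The induction hypothesis applied to $(M,M^{**})$, combined with the single $(j-i+1)$-switch from $M^{**}$ to $M^*$, completes the step.

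The main technical obstacle is realizing the abstract one-chord cycle in $H$ as an honest $M^*$-alternating cycle in $G$: a chord $u_iu_j$ of $H$ comes from one of two possible bipartite edges of $G$, and only one of these orientations is compatible with the straightforward lift through the path-edges of $C$. When the chord given by Lemma \ref{2.1} has the ``wrong'' orientation, one either reverses the traversal of the $H$-cycle, replaces the chord by a different one afforded by Lemma \ref{2.1}, or enlarges the construction to ensure at least one path-edge of $C$ participates in $D$ (which already suffices for gain $\ge 2$ in the above computation). Resolving this orientation and existence bookkeeping is the principal technical point beyond the outline above.
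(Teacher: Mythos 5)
Your construction of a short $M^*$-alternating cycle $D$ from a window $T$ of $k+1$ consecutive $M^*$-edges of $C$ is sound as far as it goes (Lemma \ref{2.1} applied to $T$ does yield an $M^*$-alternating cycle of length at most $2(k+1)$, necessarily using a chord of $C$), but the induction does not close. Your inductive statement is ``every perfect matching at symmetric-difference distance $s$ from the \emph{fixed} matching $M^*$ can be reached from $M^*$ by short switches,'' and your progress step modifies $M^*$ itself: you pass to $M^{**}=M^*\oplus E(D)$ and then invoke the induction hypothesis on the pair $(M,M^{**})$. But the hypothesis says nothing about reaching $M^{**}$; it only concerns reaching $M^*$, and $|M\oplus M^*|$ has not decreased. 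You cannot repair this by re-running the argument with $M^{**}$ in the role of $M^*$, because the one tool driving the whole proof --- Lemma \ref{2.1} --- requires $f(G,\cdot)\geq n-k$, and this property is not inherited by $M^{**}$ (by Lemma \ref{mt} a single $l$-switch can drop the forcing number by $l-1$). Nor can you strengthen the hypothesis to ``any two matchings at distance $<s$ are connected,'' since the inductive step would then have to treat pairs in which neither matching has large forcing number. A secondary, acknowledged but unresolved, problem is that the cycle produced by Lemma \ref{2.1} inside $G[V(T)]$ may consist entirely of $M^*$-edges and chords, in which case switching along it does not decrease $|M\oplus M^*|$ at all; your proposed fixes (reversing orientation, enlarging the window) are not argued.

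The paper's proof is organized precisely to avoid this trap: it inducts on $n$, never switches the special matching $M$, and instead always switches the \emph{arbitrary} matching $M'$ along a short $M'$-alternating cycle so as to create an edge common to $M$ and the new matching; once a common edge $u_iv_i$ exists, deleting its endpoints yields a smaller graph $H$ on which the restriction $M_H$ still satisfies $f(H,M_H)\geq(n-1)-k$, so the induction hypothesis genuinely applies. The hard part --- which your sketch bypasses --- is that a single chord of small span readily gives a short $M$-alternating cycle (your $D$), whereas what is actually needed is a short $M'$-alternating cycle; when the minimal chord span $j$ exceeds $k+1$, the paper must manufacture one by greedily extracting $t\geq j-k-1$ disjoint $M$-alternating cycles (hence $t+1$ chords) and splicing them with $t+1$ arcs of $C$ into a single $M'$-alternating cycle $C''$ of length at most $2(k+1)$ containing an edge of $M$. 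Any correct completion of your outline will have to confront this asymmetry between $M$- and $M'$-alternating cycles rather than switch the high-forcing matching.
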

\begin{proof}We will prove this theorem by induction on $n$.
For $n=k$, we have $F(G)\geq 0$. If $F(G)=0$, then $G$ has a unique perfect matching, and the result is trivial. Otherwise, $G$ has at least two perfect matchings, say $M$ and $M'$. Then $M\oplus M'$ forms some disjoint $M$- ($M'$-) alternating cycles of length at most $2n$. Switching along such $M'$-alternating cycles on $M'$, we obtain perfect matching $M$.

Next   suppose   $n\geq k+1$. Let $M=\{u_iv_i|1 \leq i \leq n\}$ be a perfect matching of $G$ with $f(G,M)\geq n-k$ and let $M'$ be any other perfect matching of $G$. It suffices to prove that $M$ can be obtained from $M'$ by a finite sequence of matching 2-, 3-, $\dots$, or $(k+1)$-switches.
Then we have the following claims.

\smallskip
{\textbf{Claim 1.} If $M\cap M'\neq\emptyset$, then $M$ can be obtained from $M'$ by a finite sequence of matching 2-, 3-, \dots, or $(k+1)$-switches.}
\smallskip

Suppose that $M\cap M'$ contains an edge $u_iv_i$ for some $1\leq i\leq n$.
Let $H=G-\{u_i,v_i\}$ and $M_H=M\setminus\{u_iv_i\}$, $M_H'=M'\setminus\{u_iv_i\}$. Then $H$ has $2(n-1)$ vertices, $M_H$ and $M_H'$ are two distinct perfect matchings of $H$. We will prove that $f(H,M_H)\geq (n-1)-k$. Let $T_H$ be a forcing set of $M_H$. By the definition, $T_H$ is contained in only perfect matching $M_H$ of $H$. Hence, $T_H\cup\{u_iv_i\}$ is contained in only perfect matching $M$ of $G$. So $T_H\cup\{u_iv_i\}$ is a forcing set of $M$. Since $|T_H\cup\{u_iv_i\}|\geq f(G,M)\geq n-k$, we have $|T_H|\geq n-k-1$. Thus, $F(H)\geq f(H,M_H)\geq (n-1)-k$.
By the induction hypothesis, $M_H$ can be obtained from $M_H'$ by repeatedly applying finite times of matching 2-, 3-, \dots, $(k+1)$-switches. Thus, $M$ is also obtained from $M'$ by the same series of matching 2-, 3-, \dots, $(k+1)$-switches.

 \smallskip
\smallskip
{\textbf{Claim 2.} If $M\oplus M'$ contains a cycle of length $2l$ for some $2\leq l\leq k+1$, then $M$ can be obtained from $M'$ by a finite sequence of matching 2-, 3-, \dots, or $(k+1)$-switches.}
\smallskip
\smallskip

Let $C$ be a cycle of length $2l$ in $M\oplus M'$. Switching along the $M'$-alternating cycle $C$, we can obtain a perfect matching $M''$, which has at least $l$ edges intersected with $M$. By
Claim  1, $M$ can be obtained from $M''$ by a finite sequence of matching 2-, 3-, \dots, $(k+1)$-switches. Since $M''$ is obtained from $M'$ by a matching $l$-switch, $M$ can be obtained from $M'$ by repeatedly applying finite times of matching 2-, 3-, \dots, $(k+1)$-switches, and the claim holds.

By Claims 1 and 2, next we only consider the case when $M$ and $M'$ are disjoint and every cycle in $M\oplus M'$ is of length at least $2(k+2)$.
Take a cycle $C=u_0v_0u_1v_1\dots u_{l-1}v_{l-1}u_0$ in $M\oplus M'$ for some $l\geq k+2$ such that $u_iv_i$ is an edge in $M$ and $v_iu_{i+1}$ is
an edge in $M'$ for $0\leq i\leq l-1$ where $i+1$ is considered modulo $l$. We will prove that $C$ has a chord. Taking a matching of $k+1$ edges as $\{v_1u_1,v_0u_0,v_{l-1}u_{l-1},\dots,v_{l-(k-1)}u_{l-(k-1)}\}\subseteq M$, by Lemma \ref{2.1}, the induced subgraph $G[\{v_1,u_1,v_0,u_0,v_{l-1},u_{l-1},\dots,v_{l-(k-1)},u_{l-(k-1)}\}]$ contains an $M$-alternating cycle. Since the intersection of this induced subgraph and $C$ forms an $M$-alternating path, we obtain that $C$ has a chord, say $v_iu_{i+j}$, with the subscripts modulo $l$.
\begin{figure}[h]
\centering
\includegraphics[height=4.5cm,width=6.5cm]{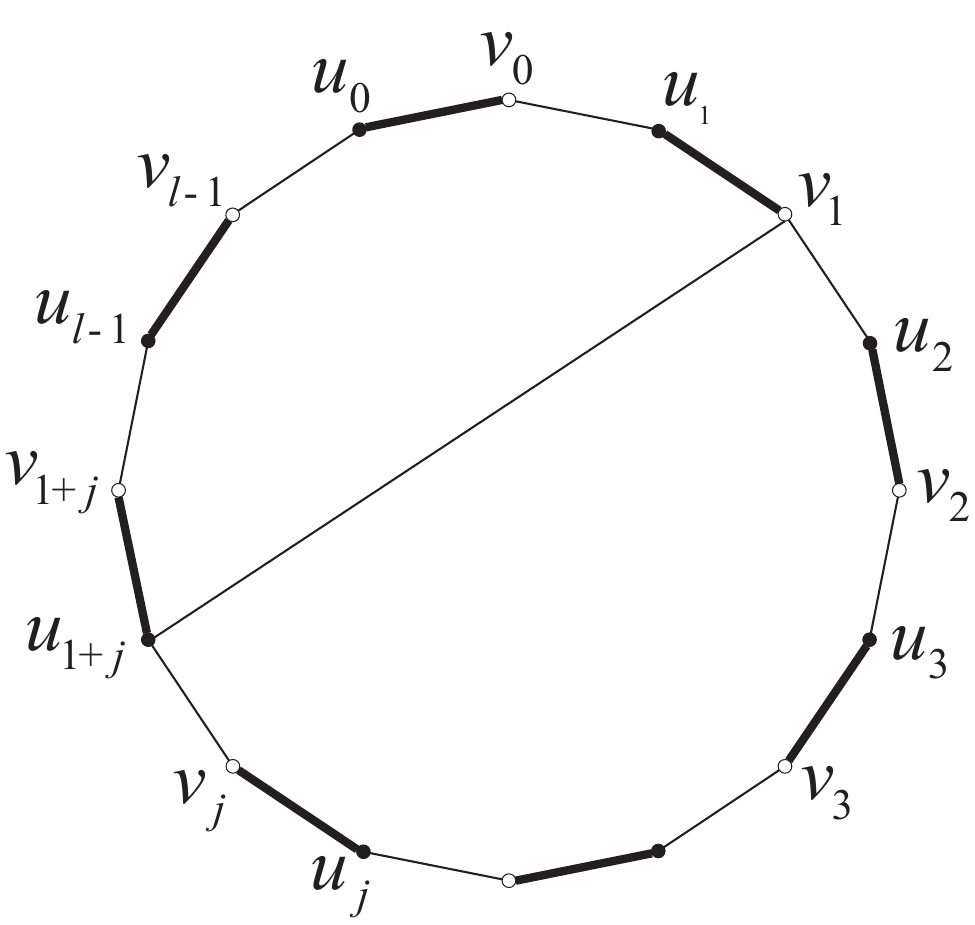}
\caption{\label{cyclec}$M$- and $M'$-alternating cycle $C$ with a chord $v_1u_{1+j}$.}
\end{figure}

Choose $i$ and $j$ so that $j$ is minimum possible. Without loss of generality, we assume that $i=1$ (see Fig. \ref{cyclec}). Since $v_1u_{1+j}$ is a chord, we have $2\leq j\leq l-1$.
If $j\leq k+1$, then $v_1u_2v_2\dots u_jv_ju_{j+1}v_1$ is an $M'$-alternating cycle containing $j$ edges of $M'$. Switching along this cycle gives a perfect matching $M''$ that has $j-1$ edges in common with $M$. By Claim 1,
$M$ can be obtained from $M''$ by a finite sequence of matching 2-, 3-, \dots, or $(k+1)$-switches. Since $M''$ is obtained from $M'$ by a matching $j$-switch, we obtain that $M$ is obtained from $M'$ by a finite sequence of matching 2-, 3-, \dots, or $(k+1)$-switches.
Therefore we assume that $j\geq k+2\geq 3$.

Next by finding at least $j-k-1$ $M$-alternating cycles, we will show that $C$ has at least $j-k-1$ extra chords.
Consider the induced subgraph $G':=G[\{u_3,v_3,u_4,v_4,\dots,u_j,v_j\}]$. In $G'$, $v_q$ ($3\leq q\leq j$) can be incident only to the chords of the form $v_qu_m$ where $3\leq m < q$. Otherwise, there exists a chord $v_qu_s$ where $q+2\leq s\leq j$. Note that $s-q\leq j-q\leq j-3< j$, which contradicts the choice of $i$ and $j$. Taking a subset $\{u_1v_1,u_3v_3,\dots, u_jv_j\}$ of $M$ with size at least $k+1$, by Lemma \ref{2.1}, $G[\{u_1,v_1,u_3,v_3,\dots, u_j,v_j\}]$ contains an $M$-alternating cycle. Since $j$ is minimum, $v_1$ is of degree 1 in $G[\{u_1,v_1,u_3,v_3,\dots, u_j,v_j\}]$, and $u_1v_1$ is not contained in this $M$-alternating cycle. Thus, this $M$-alternating cycle is entirely contained in $G'$.

 \smallskip
\smallskip
{\textbf{Claim 3.} Any $M$-alternating cycle in $G'$ is of the form $u_mv_mu_{m+1}v_{m+1}\dots u_qv_qu_m$, where $3\leq m< q\leq j$.}
\smallskip
\smallskip

Let $C'$ be an $M$-alternating cycle in $G'$ and $m$ be the smallest subscript such that $u_mv_m\in E(C')$. Then $v_mu_y\notin E(C')$ for $3\leq y< m$. Otherwise, both $v_mu_y$ and $u_yv_y$ are contained in $E(C')$, which contradicts the minimality of $m$.
Now $\{v_mu_{m+1},u_{m+1}v_{m+1}\}\subseteq E(C')$. Take the longest $M$-alternating path $u_mv_mu_{m+1}v_{m+1}\cdots u_qv_q$ on $C'$. Then $v_qu_{q+1}\notin E(C')$. Since $v_q$ ($3\leq q\leq j$) can be incident only to the chords of $v_qu_z$ where $3\leq z < q$ in $G'$, there exists $3\leq z<q$ such that $v_qu_z\in E(C')$. Combining that $C'$ is an $M$-alternating cycle, we obtain that $u_z$ is distinct from the vertices in $\{u_{m+1},u_{m+2},\dots,u_q\}$. Thus, we have $z\leq m$.
By the minimality of $m$, we have $z=m$. Hence $C'=u_mv_mu_{m+1}v_{m+1}\dots u_qv_qu_m$ (see Fig. \ref{cycleform}).
Since the intersection of $C'$ and the path $u_3v_3\cdots u_jv_j$ is just the $M$-alternating path $u_mv_m\cdots u_qv_q$, we say that $C'$ starts at $u_m$ and ends at $v_q$.
\begin{figure}[h]
\centering
\includegraphics[height=5cm,width=6.5cm]{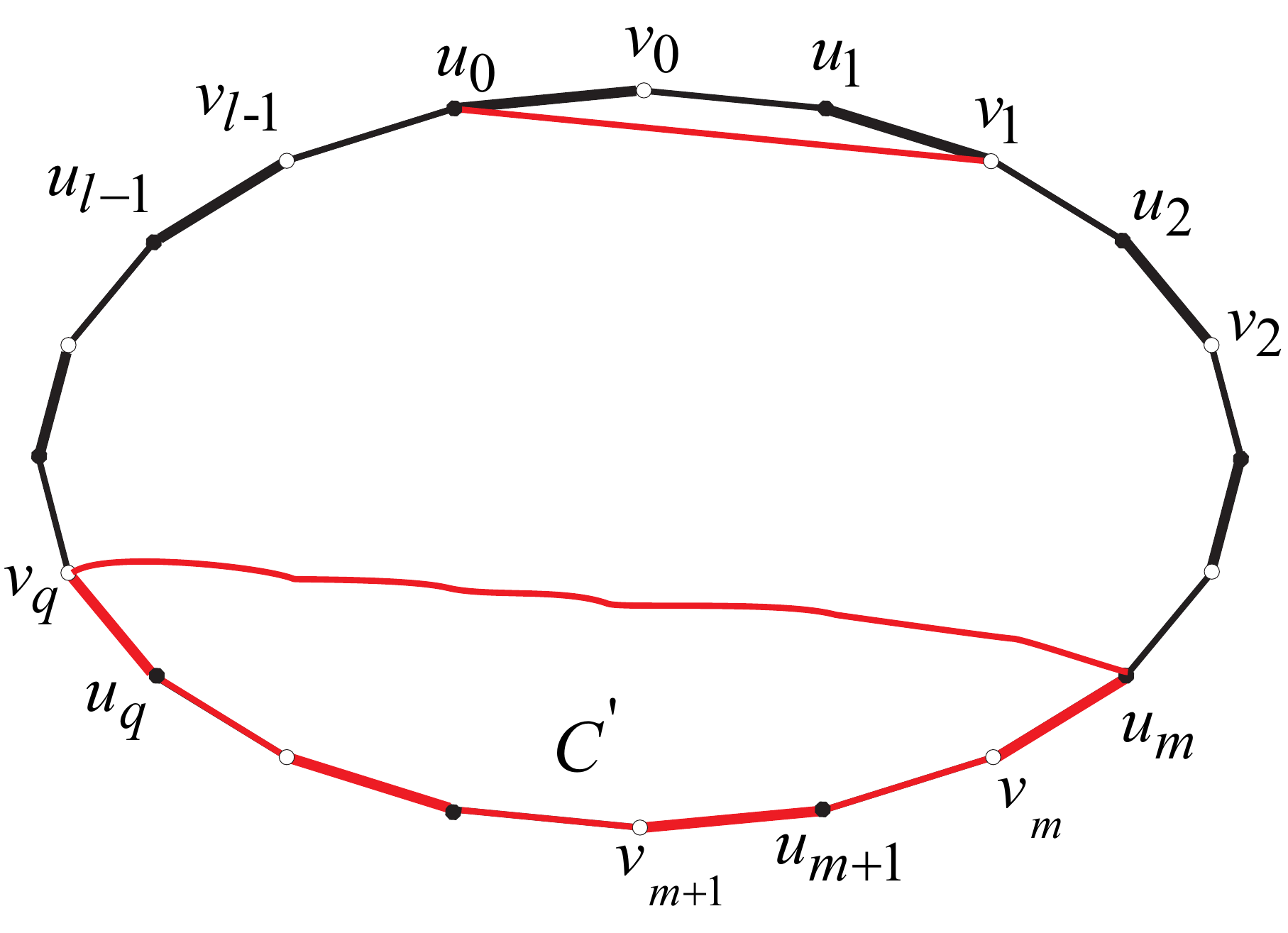}
\caption{\label{cycleform}The form of an $M$-alternating cycle $C'$ in $G'$.}
\end{figure}

Next we will prove that there exist at least $j-k-1\geq 1$ disjoint $M$-alternating cycles in $G'$. We choose such cycles greedily. Choose the first $M$-alternating cycle ends at $v_{l_1}$ such that $l_1$ is as small as possible. Having chosen such cycles ending at $v_{l_1}, v_{l_2},\dots, v_{l_r}$, choose $l_{r+1}$ to be the smallest index such that some cycle starts at $u_{m_{r+1}}$ for $m_{r+1} > l_r$ and ends at $v_{l_{r+1}}$. This process is continued until no more such cycles can be chosen. See Fig. \ref{cycleccc} for an example where the $M$-alternating cycles chosen are in red.
\begin{figure}[h]
\centering
\includegraphics[height=5cm,width=7cm]{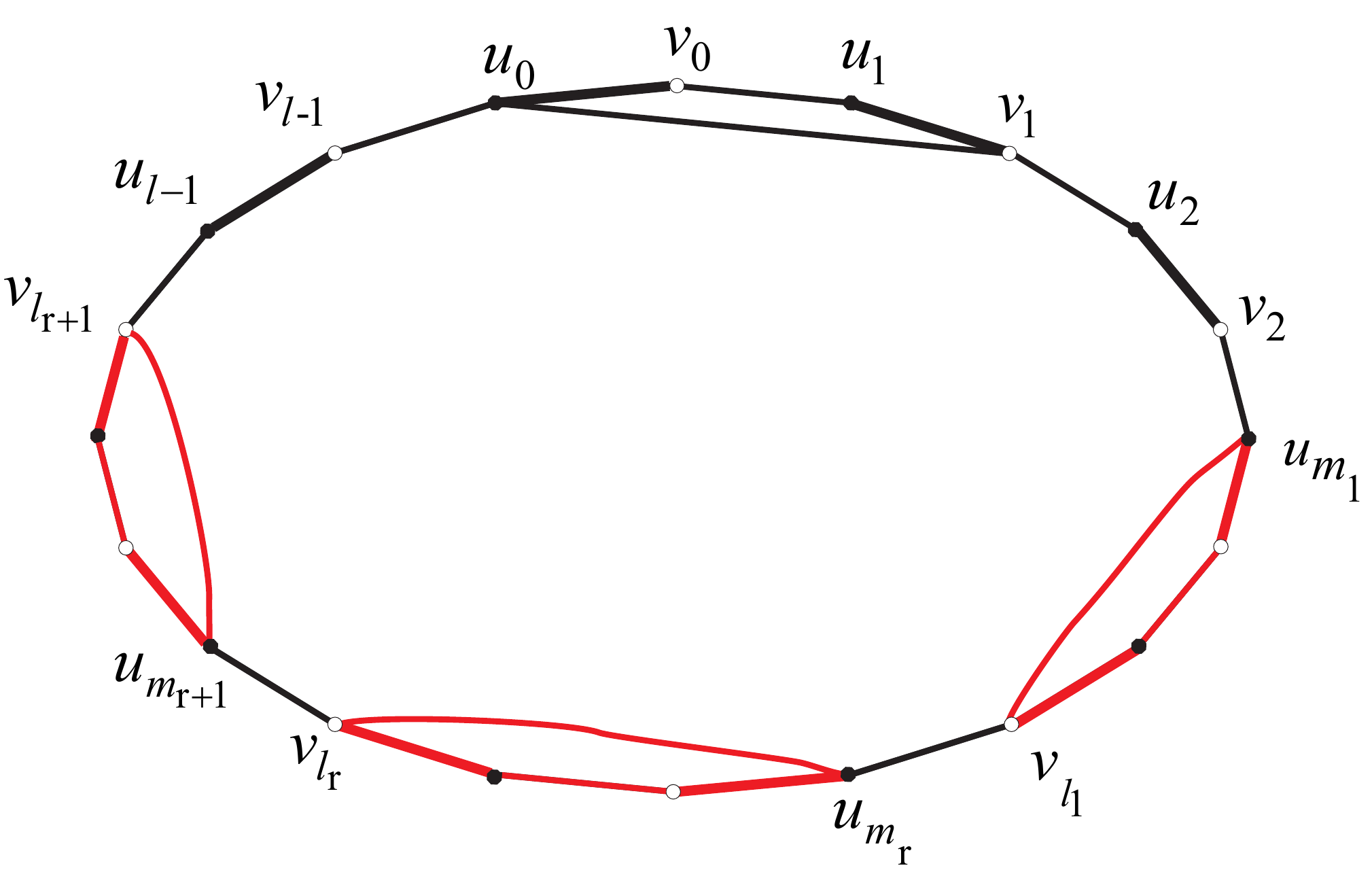}
\caption{\label{cycleccc}Constructed process of $M$-alternating cycles in $G'$.}
\end{figure}
Let $t$ denote the number of cycles selected and $m_1 < l_1 < m_2< l_2 <\cdots < m_t < l_t$ where the indices such that the $p$'th cycle starts at $u_{m_p}$ and ends at $v_{l_p}$.
We claim that $t\geq j-k-1$. Suppose $t\leq j-k-2$. Consider the induced subgraph $G'':=G[(V(G')\setminus\{u_{l_p},v_{l_p}|1\leq p \leq t\})\cup \{u_1,v_1\}]$.
Then $G''$ contains at least $(j-2)-(j-k-2)+1=k+1$ edges of $M$.  By Lemma \ref{2.1}, $G''$ contains an $M$-alternating cycle. Since $j$ is minimum such that $v_1u_{1+j}\in E(G)$, the vertex $v_1$ has degree 1 in $G''$. Thus any $M$-alternating cycle in $G''$ is contained in $G''-\{u_1,v_1\}$, which is an induced subgraph of $G'$. By Claim 3, any $M$-alternating cycle in $G'$ consists of a chord and an $M$-alternating path of $C$ between two endvertices of the chord. Hence, this $M$-alternating cycle should be selected, which is a contradiction.

Now we are to construct an $M'$-alternating cycle of length no more than $2(k+1)$ using these $t+1$ chords we have found and $t+1$ $M'$-alternating paths of $C$.
Let $v_{l_0}=v_1$ and $u_{m_{t+1}}=u_{j+1}$. Then these $t+1$ chords $\{v_1u_{j+1}\}\cup \{u_{m_p}v_{l_p}|1\leq p\leq t\}$ and $t+1$ $M'$-alternating paths $v_{l_p}u_{l_{p+1}}v_{l_{p+1}}u_{l_{p+2}}\cdots u_{m_{p+1}}$ for $0\leq p\leq t$ form an $ M'$-alternating cycle $C''$, which contains an edge $u_2v_2$ of $M$. See $C''$ in Fig. \ref{cyclet} where $t+1$ chords shown in red and $t+1$ $M'$-alternating paths shown in green.
\begin{figure}[h]
\centering
\includegraphics[height=5cm,width=7cm]{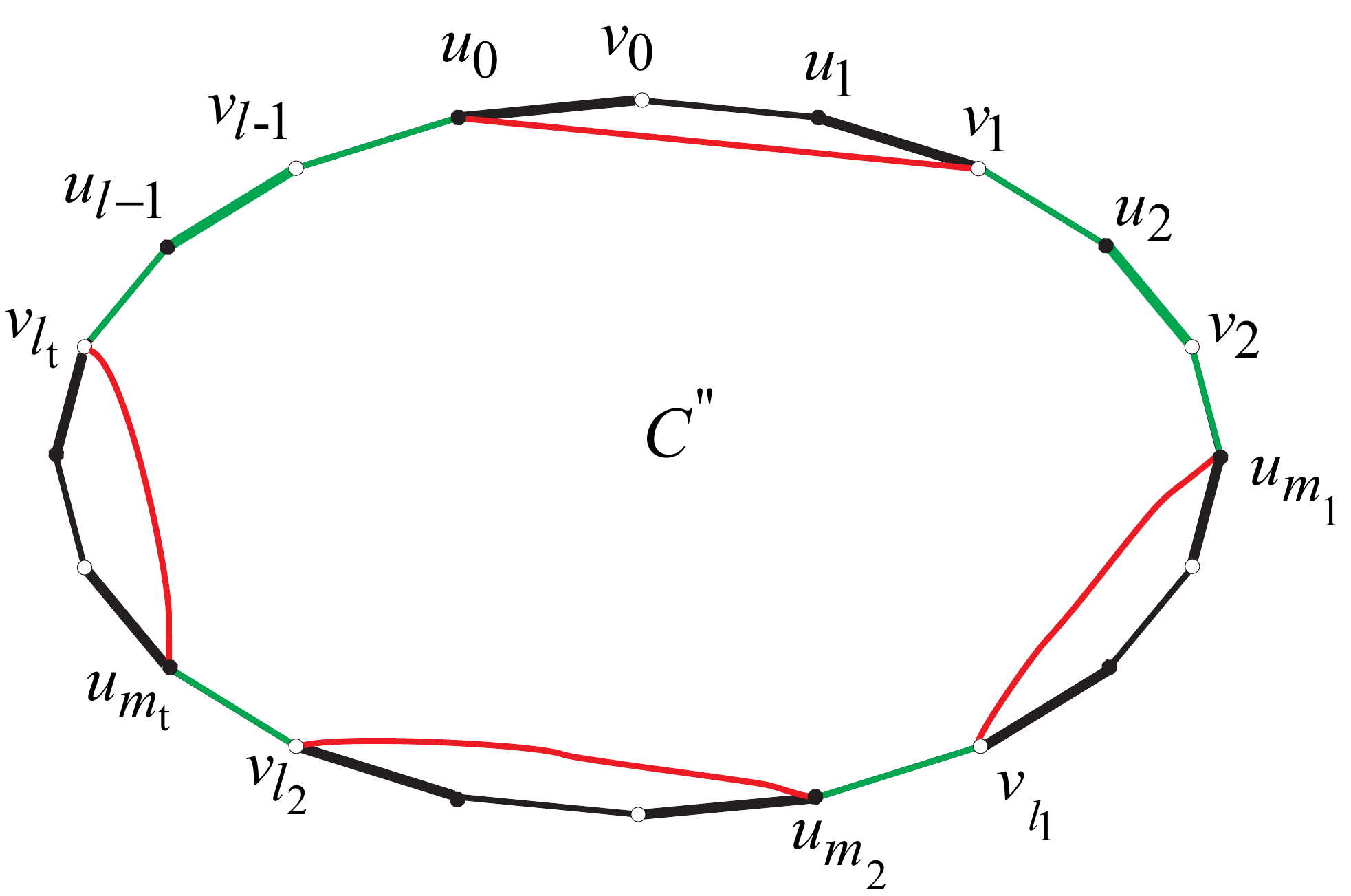}
\caption{\label{cyclet}An $M'$-alternating cycle consisting of $t+1$ chords and $t+1$ $M'$-alternating paths.}
\end{figure}

Since the $M'$-alternating path $v_1u_2v_2\cdots u_jv_ju_{j+1}$ contains exactly $j$ edges of $M'$, and each chord $u_{m_p}v_{l_p}$ decreases at least one edge in $M'$ where $1\leq p\leq t$, $C''$ contains at most $j-t\leq k+1$ edges of $M'$. Switching along $C''$ gives a perfect matching $M''$ so that $u_2v_2\in M''\cap M$. By Claim 1, $M$ can be obtained from $M''$ by a finite sequence of matching 2-, 3-, \dots, or $(k+1)$-switches. Noting that $M''$ is obtained from a matching switch along an $M'$-alternating cycle of length no more than $2(k+1)$, we obtain that $M$ can be obtained from $M'$ by a finite sequence of matching 2-, 3-, \dots, or $(k+1)$-switches.
\end{proof}

In the remianing of this section we make some remarks and applications of Theorem \ref{qpp}.
\begin{remark}\label{rmk1}
If we only apply matching 2-, 3-,\dots,$k$-switches, then the result in Theorem \ref{qpp} is not necessarily true. To show this point we construct a bipartite graph $G_{n,k}$ of order $2n$   as follows (see Fig. \ref{ep3}): $V(G_{n,k})=\{u_{i},v_i|1\leq i\leq n\},$ and  $$E(G_{n,k})=\{v_iu_{i},v_iu_{i+1}|1\leq i\leq k-1\}\cup \{u_1v_{k+1}\}\cup \{u_tv_s|k\leq t,s\leq n\}-\{u_kv_{k+1}\}.$$
\end{remark}
\begin{figure}[h]
\centering
\includegraphics[height=3.5cm,width=10cm]{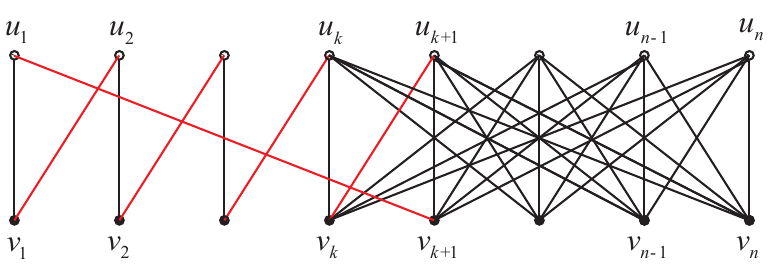}
\caption{\label{ep3}A bipartite graph $G_{n,k}$ for Remark \ref{rmk1}.} 
\end{figure}

\begin{pro}\label{rmk} (\expandafter{\romannumeral1}) For $2\leq k\leq n-1$, we have $F(G_{n,k})=n-k$.\\
(\expandafter{\romannumeral2}) Let $M=\{u_iv_i|1\leq i\leq n\}$ and $M'=\{v_iu_{i+1}|1\leq i\leq k\}\cup\{u_1v_{k+1}\}\cup\{u_{i}v_{i}|k+2\leq i\leq n\}$ be two perfect matchings of $G_{n,k}$. Then $M$ cannot be obtained from $M'$ by only applying matching 2-, 3-, \dots, or $k$-switches.
\end{pro}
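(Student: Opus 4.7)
For part (i), I would apply Lemma \ref{2.1} to the natural perfect matching $M = \{u_iv_i : 1 \leq i \leq n\}$ to establish the lower bound $F(G_{n,k}) \geq n-k$. Given any $(k+1)$-subset $T \subseteq M$, I exhibit an $M$-alternating cycle in $G_{n,k}[V(T)]$ by splitting cases: if $T$ contains two edges $u_iv_i$, $u_jv_j$ with $i,j \geq k+1$, the only missing bipartite edge involves index $k$, so the crossing edges $u_iv_j$, $u_jv_i$ exist and yield an alternating $4$-cycle; otherwise $T \supseteq \{u_iv_i : 1 \leq i \leq k\}$ plus a single $u_rv_r$ with $r \geq k+1$, in which case I use the $4$-cycle $u_kv_ku_rv_ru_k$ when $r \geq k+2$, or the long cycle $u_1v_1u_2v_2\cdots u_kv_ku_{k+1}v_{k+1}u_1$ (exploiting the special edge $u_1v_{k+1}$) when $r = k+1$.

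For the upper bound $F(G_{n,k}) \leq n-k$, I observe that every perfect matching $F$ of $G_{n,k}$ is essentially determined on the degree-$2$ region. Since $u_1$ has only the neighbors $v_1$ and $v_{k+1}$, either $u_1v_1 \in F$ or $u_1v_{k+1}\in F$. In the first case, iterating through the degree-$2$ vertices $v_1, u_2, v_2, \ldots, u_{k-1}, v_{k-1}$ forces $u_iv_i \in F$ for $1 \leq i \leq k-1$ and $u_kv_j \in F$ for some $j\in\{k, k+2,\ldots,n\}$; removing these $k$ edges from $F$ yields a forcing set of size $n-k$ because the induced subgraph on their vertex set is a path on $2k$ vertices (hence has a unique perfect matching). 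The case $u_1v_{k+1}\in F$ is symmetric: it forces $v_iu_{i+1}\in F$ for $1\leq i\leq k-1$, and deleting these $k$ edges again leaves a path subgraph with a unique perfect matching, providing an $(n-k)$-element forcing set.

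The core of part (ii) is the claim that whenever $N$ is a perfect matching of $G_{n,k}$ containing $u_1v_{k+1}$, every $N$-alternating cycle through $u_1v_{k+1}$ has length at least $2(k+1)$. Indeed, $u_1$ having degree $2$ forces the cycle to use $u_1v_1$ as well; then the degree-$2$ cascade along $v_1,u_2,v_2,\ldots,v_{k-1}$ forces the cycle to contain the entire path $u_1v_1u_2v_2\cdots v_{k-1}u_k$, contributing $2k-2$ edges beyond $u_1v_{k+1}$. The cycle must then return from $u_k$ to $v_{k+1}$ through an alternating subpath starting and ending with non-$N$ edges; such a path has odd length, and length $1$ is ruled out because $u_kv_{k+1}$ is precisely the deleted bipartite edge, so the return subpath has at least $3$ edges. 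This gives cycle length at least $(2k-2) + 1 + 3 = 2(k+1)$, so any switch removing $u_1v_{k+1}$ is at least a $(k+1)$-switch. Since $u_1v_{k+1}\in M'\setminus M$, any sequence of switches from $M'$ to $M$ must eliminate $u_1v_{k+1}$ at some stage, which requires a switch of size at least $k+1$; hence $M$ cannot be obtained from $M'$ using only $2$-, $3$-, \dots, or $k$-switches. The most delicate point is verifying the degree-$2$ propagation rigorously, since one must carefully track the alternation pattern to ensure the cycle is forced to traverse the entire path from $u_1$ to $u_k$, independent of the particular structure of $N$ on the complete-bipartite region.
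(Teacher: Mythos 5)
Your proof is correct and follows essentially the same route as the paper for both parts: the same case analysis via Lemma \ref{2.1} for the lower bound, the same degree-2 propagation from $u_1$ for the upper bound on $f(G_{n,k},F)$, and the same observation that the missing edge $u_kv_{k+1}$ forces any alternating cycle through $u_1v_{k+1}$ to have length at least $2(k+1)$. Your part (\expandafter{\romannumeral2}) is phrased slightly more carefully than the paper's, since you argue for an arbitrary intermediate matching $N$ containing $u_1v_{k+1}$ rather than only for $M'$, but all such matchings agree on the degree-2 region, so the two arguments coincide.
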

\begin{proof}(\expandafter{\romannumeral1})
First we prove that $f(G_{n,k},M)\geq n-k$. Let $T$ be any subset of $M$ with size at least $k+1$. If $T$ contains at least two edges of $\{u_{k+1}v_{k+1},u_{k+2}v_{k+2},\dots,u_nv_n\}$, then $G_{n,k}[V(T)]$ contains an $M$-alternating cycle for $G_{n,k}[\{u_{k+1},v_{k+1},\dots,u_n,v_n\}]$ is a complete bipartite graph. Otherwise, $T$ contains exactly one edge of $\{u_{k+1}v_{k+1},u_{k+2}v_{k+2},\dots,u_nv_n\}$ and all edges of $\{u_1v_1,u_2v_2,\dots, u_kv_k\}$. If $u_{k+1}v_{k+1}\in T$, then $G_{n,k}[V(T)]$ itself forms an $M$-alternating cycle $v_{k+1}u_1v_1u_2v_2\cdots u_kv_ku_{k+1}v_{k+1}$. If $u_{k+1}v_{k+1}\notin T$ and $u_iv_i\in T$ for some $k+2\leq i\leq n$, then $G_{n,k}[V(T)]$ contains an $M$-alternating cycle $u_kv_ku_iv_iu_k$. By Lemma \ref{2.1}, we obtain that $F(G_{n,k})\geq f(G_{n,k},M)\geq n-k$.

Next we prove that $f(G_{n,k},F)\leq n-k$ for any perfect matching $F$ of $G_{n,k}$. Since $u_1$ is of degree 2, $F$ contains $u_1v_1$ or $u_1v_{k+1}$. If $u_1v_{k+1}\in F$, then $\{v_1u_2,v_2u_3,\dots,v_{k-1}u_{k}\}\subseteq F$. Since $G_{n,k}[\{u_1,v_{k+1},v_1,u_2,\dots,v_{k-1},u_{k}\}]$ contains no $F$-alternating cycle, $F\setminus \{u_1v_{k+1},v_1u_2,\dots,v_{k-1}u_{k}\}$ is a forcing set of $F$ and $f(G_{n,k},F)\leq n-k$. If $u_1v_1\in F$, then $\{u_2v_2,\dots,u_{k-1}v_{k-1}\}\subseteq F$. We assume that $u_kv_i\in F$ for some $i\in \{k,k+2, k+3,\dots,n\}$. Since $G_{n,k}[\{u_1,v_1,\dots,u_{k-1},v_{k-1},u_k,v_i\}]$ contains no $F$-alternating cycle, $F\setminus\{u_1v_1,u_2v_2,\dots,u_{k-1}v_{k-1},u_kv_i\}$ is a forcing set of $F$ and $f(G_{n,k},F)\leq n-k$. Therefore, we have $f(G_{n,k},F)\leq n-k$. By the arbitrariness of $F$, we have $F(G_{n,k})\leq n-k.$

(\expandafter{\romannumeral2})
If we want to switch $M'$ to $M$ by matching 2-, 3-, \dots, or $k$-switches, then we must switch $u_1v_{k+1}$ to another edge by some $M'$-alternating cycle of length at most $2k$. Since $u_i$ and $v_i$ are of degree two for $1\leq i\leq k-1$, any $M'$-alternating cycle must contain the $M'$-alternating path with $2k$ vertices $v_{k+1}u_1v_1u_2\cdots v_{k-1}u_k$. Since $u_kv_{k+1}\notin E(G_{n,k})$, the $M'$-alternating path cannot be contained in any $M'$-alternating cycle of length at most $2k$. Thus, there is no such matching 2-, 3-, \dots, or $k$-switches to change $M'$ to $M$.
\end{proof}

Obviously, $G_{n,k}$ is not $C_{2(k+1)}$-free. For $C_6$-, $C_8$-, \dots, and $C_{2(k+1)}$-free graphs, we can obtain a stronger result than Theorem \ref{qpp}.
\begin{cor}\label{qpp2} Let $G$ be a $C_6$-, $C_8$-, \dots, and $C_{2(k+1)}$-free bipartite graph of order $2n$, where $1\leq k\leq n$. If $F(G)=n-k$, then any two perfect matchings of $G$ can be obtained from each other by a finite sequence of matching 2-switches.
\end{cor}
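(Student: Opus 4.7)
The plan is to combine Theorem \ref{qpp} with an inductive decomposition of long alternating-cycle switches into 2-switches. Theorem \ref{qpp} already guarantees that any two perfect matchings of $G$ are linked by a sequence of matching $l$-switches with $2 \le l \le k+1$, so it suffices to show that, under the additional $C_6,C_8,\ldots,C_{2(k+1)}$-free hypothesis, every individual matching $l$-switch with $3 \le l \le k+1$ can itself be realized as a finite sequence of matching 2-switches.

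The key claim I would prove by induction on $l$ is: for every $2 \le l \le k+1$, every perfect matching $N$ of $G$, and every $N$-alternating cycle $D$ of length $2l$ in $G$, the perfect matching $N\oplus E(D)$ is reachable from $N$ by a finite sequence of 2-switches. The base case $l=2$ is immediate since $D$ itself is a $4$-cycle. For the inductive step with $l\ge 3$, the cycle $D$ has length $2l\in\{6,8,\ldots,2(k+1)\}$, so the $C_{2l}$-free hypothesis forces $D$ to possess a chord $e=uv$. Since $G$ is bipartite, $u$ and $v$ lie in opposite color classes, and since $D$ is $N$-alternating, $e\notin N$ (otherwise $u,v$ would be $N$-matched to each other, contradicting that their $N$-partners are their cyclic neighbors in $D$). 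The chord therefore splits $D$ into two even cycles $D_1,D_2$ of lengths $2l_1,2l_2$ with $l_1+l_2=l+1$ and $2\le l_i\le l-1$.

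Labeling $D=v_0v_1\cdots v_{2l-1}v_0$ with $N$-edges $v_{2m}v_{2m+1}$ and choosing the chord as $e=v_iv_j$ with $i$ even, $j$ odd, $i<j$, the arc $D_1=v_iv_{i+1}\cdots v_jv_i$ is $N$-alternating (its path starts and ends with $N$-edges, and $e\notin N$), whereas the complementary arc $D_2=v_jv_{j+1}\cdots v_iv_j$ has two consecutive non-$N$ edges at $v_j$ and is therefore \emph{not} $N$-alternating. The remedy is to switch $D_1$ first: set $N':=N\oplus E(D_1)$, which by the inductive hypothesis (applied since $l_1<l$) is reachable from $N$ by 2-switches. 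The effect on $D_2$ is that the chord flips from $\notin N$ to $\in N'$ while every other edge of $D_2$ is unchanged, and a direct check shows that $D_2$ is now $N'$-alternating. A second application of the inductive hypothesis (since $l_2<l$) then reaches $N'\oplus E(D_2)=N\oplus E(D_1)\oplus E(D_2)=N\oplus E(D)$ from $N'$ by 2-switches. Concatenation yields the required 2-switch sequence from $N$ to $N\oplus E(D)$.

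Feeding this claim into Theorem \ref{qpp} finishes the corollary. The main subtlety to watch for is the asymmetry between $D_1$ and $D_2$: only one of them is $N$-alternating at the outset, but performing its switch first repairs the alternation along the other. Verifying this requires only the parity bookkeeping sketched above, exploiting that the chord's endpoints have opposite parities because $G$ is bipartite.
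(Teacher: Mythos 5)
Your proposal is correct and follows essentially the same route as the paper: invoke Theorem \ref{qpp}, then use the $C_{2l}$-freeness to find a chord of each alternating cycle of length $2l\ge 6$ and split the switch along it into two switches along strictly shorter even cycles, iterating (the paper) or inducting (you) down to 4-cycles. Your explicit parity check that only one of the two subcycles is alternating for the current matching, and that the other becomes alternating after the first switch, is exactly the point the paper handles by declaring $C^2$ an $M\oplus E(C^1)$-alternating cycle.
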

\begin{proof}By Theorem \ref{qpp}, any two perfect matchings of $G$ can be obtained from each other by a finite sequence of matching 2-, 3-, \dots, or $(k+1)$-switches.

Take a perfect matching $M$ of $G$ and  an $M$-alternating cycle $C=u_1v_1u_2v_2\cdots u_lv_lu_1$, where $3\leq l\leq k+1$ and $u_rv_r\in M$ for $1\leq r\leq l$. First we prove that a matching $l$-switch on $M$ along $C$ can be obtained by matching switches along two cycles of length no more than  $2(l-1)$.
Since $G$ is $C_{2l}$-free, $C$ has a chord, say $u_iv_j$, where $1\leq i< j\leq l$. Let $C^1:=u_iv_i\cdots u_jv_ju_i$ be an $M$-alternating cycle and $C^2:=v_ju_{j+1}v_{j+1}\cdots u_lv_lu_1v_1\cdots u_iv_j$ be an $M\oplus E(C^1)$-alternating cycle, whose lengths are at most $2(l-1)$. Then $M\oplus E(C)=M\oplus E(C^1)\oplus E(C^2)$. That is, a matching $l$-switch  on $M$ along $C$ can be obtained from two consecutive matching switches on $M$ and $M\oplus E(C^1)$ along $C^1$ and $C^2$, respectively.

If $C^i$ has length at least 6 for some $i\in \{1,2\}$, then we can continue the process until all cycles obtained are of length 4. Therefore, we can replace all matching $l$-switches for $3\leq l\leq k+1$ by finite times of matching 2-switches.
\end{proof}

\begin{remark}\label{rmk2} Theorem \ref{qpp} does not hold for non-bipartite graphs where $k\geq  2$. 
To show this point, we define the following graphs.

  Given  integers $n\geq 4$ and $k\geq 2$. Assume that $\sum^{k}_{j=1} n_j=n$ is a partition of $n$ so that each part $n_i$ is at least two. For $1\leq i\leq k$, construct  graph $G^i$: the vertex set is $U_i\cup V_i$ where $U_i=\{u_{\sum^{i-1}_{j=1} n_j+1},u_{\sum^{i-1}_{j=1} n_j+2},\dots,u_{\sum^{i}_{j=1} n_j}\}$ and $V_i=\{v_{\sum^{i-1}_{j=1} n_j+1},v_{\sum^{i-1}_{j=1} n_j+2},\dots,v_{\sum^{i}_{j=1} n_j}\}$; Let $M^i=\{u_lv_l|\sum^{i-1}_{j=1} n_j+1\leq l\leq \sum^{i}_{j=1} n_j\}$ and $T_i=\{v_lu_{l+1}|\sum^{i-1}_{j=1} n_j+1\leq l\leq \sum^{i}_{j=1} n_j-1\}.$ Then the $G^i$ is obtained from the union of  complete graphs  $G^i[V_i]$ and $G^i[U_i]$ by adding $M^i\cup T_i$.

  Obviously $M^i$ is a perfect matching of $G^i$.
Since $G^i[\{v_j,u_j,v_s,u_s\}]$ contains an $M^i$-alternating cycle for any two edges $\{u_jv_j,u_sv_s\}\subseteq M^i$, by Lemma \ref{2.1}, $F(G^i,M^i)=n_i-1$ for $1\leq i\leq k$.

Further let $G$ be a larger graph obtained from the disjoint union of $G^1$, $G^2$, \dots, $G^k$ by adding the extra edges $S=\{v_{n}u_1\}\cup \{v_{\sum^{i-1}_{j=1} n_j}u_{\sum^{i-1}_{j=1} n_j+1}| 2\leq i\leq k\}$ (see Fig. \ref{example}). Then $M=M^1\cup M^2\cup\cdots \cup M^{k}$ and $M'=\bigcup_{i=1}^kT_i\cup S$ are two perfect matchings of $G$.
\end{remark}

\begin{figure}[h]
\centering
\includegraphics[height=3cm,width=14cm]{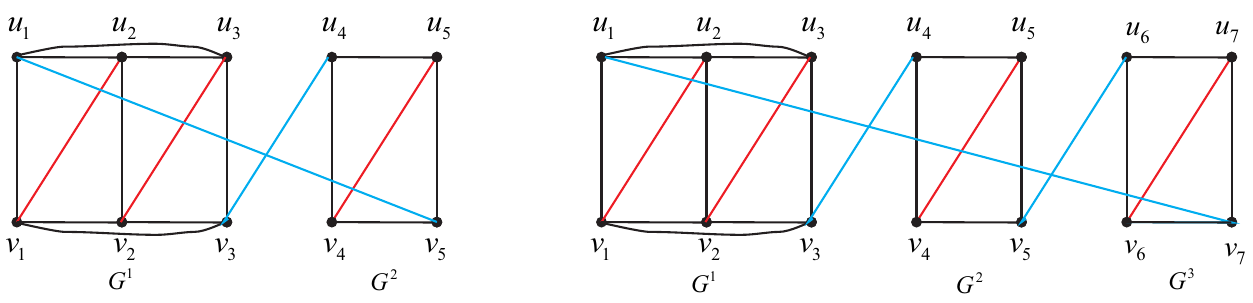}
\caption{\label{example}Graphs $G$ with perfect matchings $M$   and $M'$ for $k=2$ (left) and $k=3$ (right).}
\end{figure}

\begin{pro}\label{rmk2}  $F(G)= n-k$ and  $M$ cannot be obtained from $M'$ by applying only matching 2-, 3-, \dots, or $(k+1)$-switches.
\end{pro}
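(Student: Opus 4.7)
For the lower bound $F(G) \geq n-k$ in (i), I would verify $f(G, M) \geq n-k$ via Lemma~\ref{2.1}. Given any $T \subseteq M$ with $|T| \geq k+1$, the pigeonhole principle applied to the $k$ pieces forces $|T \cap M^i| \geq 2$ for some $i$; since $F(G^i, M^i) = n_i - 1$, Lemma~\ref{2.1} applied inside $G^i$ produces an $M^i$-alternating cycle in $G^i[V(T \cap M^i)] \subseteq G[V(T)]$, and this cycle is $M$-alternating in $G$.

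For the upper bound $F(G) \leq n-k$, I would show that every perfect matching $F$ of $G$ admits a subset $T \subseteq F$ of size $k$ such that $G[V(T)]$ carries no $F$-alternating cycle. Write $u^*_i$ and $v^*_i$ for the two $S$-incident vertices of $G^i$; these are not adjacent in $G$. A parity observation shows that if only one of $u^*_i, v^*_i$ were $S$-matched by $F$ then $F \cap E(G^i)$ would have to match $2n_i - 1$ vertices, an impossibility, so $F \cap S \in \{\emptyset, S\}$. When $F \cap S = S$, take $T = S$: since $G$ has no edge $u^*_i v^*_i$ and no non-$S$ cross-piece edges, $G[V(S)]$ is a disjoint union of $k$ edges. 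When $F \cap S = \emptyset$, for each $i$ pick $e_i \in F \cap E(G^i)$ as a non-boundary edge when $n_i \geq 3$ (possible since at most two $F$-edges in $G^i$ cover the two boundary vertices) and as the unique $F$-edge containing $u^*_i$ when $n_i = 2$; this forces $v^*_i \notin V(e_i)$ for every $i$, so no $S$-edge $v^*_i u^*_{i+1}$ lies in $G[V(T)]$, and $G[V(T)]$ is again $k$ disjoint edges. Lemma~\ref{2.1} then yields $f(G, F) \leq n-k$.

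For (ii), introduce the invariant $\phi(N) := |N \cap S|$ on perfect matchings $N$ of $G$, which by the dichotomy above lies in $\{0, k\}$. A matching switch along an alternating cycle $C$ either satisfies $C \cap S = \emptyset$ (and preserves $\phi$) or must contain every $S$-edge (any partial $S$-contribution would push $\phi$ outside $\{0, k\}$), in which case $\phi$ flips between $0$ and $k$. Since $\phi(M') = k$ and $\phi(M) = 0$, any switch-sequence from $M'$ to $M$ must include at least one \emph{flipping} switch along some cycle $C$ with $S \subseteq C$. Deleting the $k$ edges of $S$ breaks $C$ into $k$ paths, and by tracking endpoints each path lies in a single $G^j$ and joins $u^*_j$ to $v^*_j$; in $C$ each such path is preceded and followed by $S$-edges (which lie in $M'$), so it starts and ends with a non-$M'$-edge and hence has odd length, and the absence of the edge $u^*_j v^*_j$ in $G$ rules out length $1$, forcing length $\geq 3$. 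Summing gives $|C| \geq 3k + k = 4k$, which exceeds $2(k+1)$ whenever $k \geq 2$; hence no flipping switch can be realized as a matching $l$-switch with $l \leq k+1$, so $M$ is unreachable from $M'$ by any such sequence. The main obstacle I expect is the bookkeeping in the upper-bound case analysis of (i), specifically ensuring that the asymmetric boundary choice (always covering $u^*_i$) suffices to eliminate all inter-piece edges from $G[V(T)]$.
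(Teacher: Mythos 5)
Your proof is correct and follows essentially the same route as the paper's: the pigeonhole/piecewise argument for the lower bound, the parity dichotomy $F\cap S\in\{\emptyset,S\}$ together with a one-edge-per-piece set inducing no alternating cycle for the upper bound, and the observation that any switch cycle meeting $S$ must contain all of $S$ and hence has length at least $4k>2(k+1)$. Your $\phi$-invariant in (ii) is a slightly cleaner packaging of the paper's more informal statement that one must ``switch $v_{n_1}u_{n_1+1}$ along an $M'$-alternating cycle''; the only nit is that the $S$-edges of the flipping cycle lie in the \emph{current} matching $N$ rather than literally in $M'$, but the parity count giving odd path length at least $3$ inside each piece goes through either way.
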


\begin{proof} 

(\expandafter{\romannumeral1}) First we have $$F(G)\geq f(G,M)\geq \sum_{i=1}^{k}f(G^i,M^i)=\sum_{i=1}^{k}(n_i-1)=n-k.$$ Next we are to prove that $f(G,F)\leq n-k$ for any perfect matching $F$ of $G$.  If $S\cap F\neq\emptyset$, then $S\subseteq F$ since each $G^i$ is of even order. If $S\subseteq F$, then $F\setminus S$ is a forcing set of $F$ and $f(G,F)\leq |F\setminus S|=n-k$. Otherwise, we have $S\cap F=\emptyset$. Then $F\cap E(G^i)$ is a perfect matching of $G^i$ for $1\leq i\leq k$. Let $s_i$ be one edge of $F\cap E(G^i)$. Since $u_{\sum^{i-1}_{j=1} n_j+1}$ and $v_{\sum^{i}_{j=1} n_j}$ is not adjacent for $1\leq i\leq k$, $G[\cup_{i=1}^k V(\{s_i\})]$ contains no $F$-alternating cycles. Thus, $F\setminus (\cup_{i=1}^ks_i)$ is a forcing set of $F$, and $f(G,F)\leq |F\setminus (\cup_{i=1}^ks_i)|=n-k$. By the arbitrariness of $F$, we have $F(G)\leq n-k$. Therefore, $F(G)=n-k$.

(\expandafter{\romannumeral2})
If we want to transform  $M'$ into $M$ by matching 2-, 3-, \dots, or $(k+1)$-switches, then we must switch $v_{n_1}u_{n_1+1}$ to another edge by some $M'$-alternating cycle of length at most $2(k+1)$. Since $G^i$ is of even order for $1\leq i\leq k$, all edges in $S$ are contained in this $M'$-alternating cycle. Since any two edges of $S$ are not adjacent, such $M'$-alternating cycle contains at least one edges of $T^i$ and two edges of $E(G^i)\setminus T^i$. Thus, such $M'$-alternating cycles are of length at least $4k\geq 2(k+2)$. Therefore, there is no such matching 2-, 3-, \dots, or $(k+1)$-switches to change $M'$ to $M$.
\end{proof}

Matching switch is an important tool to study the forcing spectrum of a graph, which collects the forcing numbers of all perfect matchings of $G$. 
Afshani et al. \cite{5} showed that a matching 2-switch on a perfect matching does not change the forcing number by more than 1. Theorem \ref{cont} was proved in this way.

For a plane elementary bipartite graph, Deng \cite{52} obtained that a matching $l$-switch on a perfect matching does not change the forcing number by more than $l-1$. In fact, this result can be extended to general graphs.
\begin{lem}\label{mt}Let $G$ be a graph with a perfect matching $M$, and $C_{2l}$ be an $M$-alternating cycle. Then $|f(G,M\oplus E(C_{2l}))-f(G,M)|\leq l-1$.
\end{lem}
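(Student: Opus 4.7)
The plan is to bound $f(G, M')$ in terms of $f(G, M)$ by an explicit construction, where $M' := M \oplus E(C_{2l})$; the reverse inequality then follows by the symmetric roles of $M$ and $M'$. Let $S$ be a minimum forcing set of $M$, so $S \subseteq M$ and $|S| = f(G,M)$, and write $S_0 := S \cap E(C_{2l})$. First I would observe that $S_0 \neq \emptyset$: otherwise $S$ would be contained in $M \setminus E(C_{2l}) = M' \setminus E(C_{2l}) \subseteq M'$, and then $M'$ (which differs from $M$) would be another perfect matching containing $S$, contradicting that $S$ forces $M$. So $|S_0| \geq 1$.

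My candidate forcing set for $M'$ is
\[
S' := (S \setminus E(C_{2l})) \cup (M' \cap E(C_{2l})) \subseteq M',
\]
which has cardinality $|S| - |S_0| + l \leq f(G,M) + (l-1)$. The main (short) step is verifying that $S'$ is indeed a forcing set of $M'$. Let $N$ be any perfect matching of $G$ with $S' \subseteq N$. Since the $l$ edges of $M' \cap E(C_{2l})$ already saturate every vertex of $V(C_{2l})$ and $N$ is a matching, no other edge of $E(C_{2l})$ can lie in $N$; hence $N \cap E(C_{2l}) = M' \cap E(C_{2l})$. Then $N \oplus E(C_{2l})$ is a perfect matching of $G$ that contains $(S \setminus E(C_{2l})) \cup (M \cap E(C_{2l})) \supseteq S$, so by the forcing property of $S$ we conclude $N \oplus E(C_{2l}) = M$, i.e.\ $N = M'$. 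This gives $f(G,M') \leq f(G,M) + (l-1)$, and swapping the roles of $M$ and $M'$ yields $|f(G,M') - f(G,M)| \leq l-1$.

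I do not expect a genuine obstacle here: the construction directly generalizes the $l=2$ argument of Afshani et al.\ and Deng's plane-graph version. The only subtle point is the initial observation that $|S_0| \geq 1$, and this is precisely what converts the naive bound $l$ (replacing the $l$ edges of $M \cap E(C_{2l})$ by those of $M' \cap E(C_{2l})$) into the sharper bound $l-1$.
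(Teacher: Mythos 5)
Your proposal is correct and follows essentially the same route as the paper: the same replacement set $S' = (S\setminus E(C_{2l}))\cup(M'\cap E(C_{2l}))$, the same cardinality count, and the same key observation that $S\cap E(C_{2l})\neq\emptyset$. The only differences are cosmetic — you spell out why $S\cap E(C_{2l})\neq\emptyset$ (which the paper merely asserts) and verify the forcing property of $S'$ directly on an arbitrary perfect matching $N$ rather than via the uniqueness of the perfect matching of $G-V(S')$.
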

\begin{proof}Suppose that $E(C_{2l})\cap M=\{e_1,e_3,\dots,e_{2l-1}\}$ and $E(C_{2l})\setminus M=\{e_2,e_4,\dots,e_{2l}\}$.
Let $S$ be a minimum forcing set of $M$. Then $S\cap E(C_{2l})\neq \emptyset$ and $G-V(S)$ has a unique perfect matching. Let $S'=S\cup \{e_2,e_4,\dots,e_{2l}\}\setminus \{e_1,e_3,\dots,e_{2l-1}\}$. Then $G-V(S')$ is an induced subgraph of $G-V(S)$, and thus $G-V(S')$ has a unique perfect matching. So $S'$ is a forcing set of the perfect matching $M\oplus E(C_{2l})$. Since $|S'|=|S|+l-|S\cap E(C_{2l})|\leq |S|+l-1$, we have $$f(G,M\oplus E(C_{2l}))\leq |S'|\leq|S|+l-1=f(G,M)+l-1.$$ Hence $f(G,M\oplus E(C_{2l}))-f(G,M)\leq l-1$. When we convert $M\oplus E(C_{2l})$ to $M$, we obtain that $f(G,M)-f(G,M\oplus E(C_{2l}))\leq l-1$. Thus, we obtain the required result.
\end{proof}

Combining Theorem \ref{qpp}, Corollary \ref{qpp2} and Lemma \ref{mt}, we obtain the following result.
\begin{cor}\label{cor}Let $G$ be a bipartite graph of order $2n$ and with $F(G)= n-k$ where $1\leq k\leq n$. Then there are at most $k-1$ gaps between any two successive integers of the forcing spectrum of $G$.
If $G$ is $C_6$-, $C_8$-, \dots, and $C_{2(k+1)}$-free, then the forcing spectrum of $G$ is continuous.
\end{cor}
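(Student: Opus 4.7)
The plan is to combine the reachability result (Theorem \ref{qpp}) with the bounded-change result for a single switch (Lemma \ref{mt}). First I would fix any two perfect matchings $M$ and $M'$ of $G$ and, by Theorem \ref{qpp}, produce a sequence of perfect matchings
\[
M = M_0, M_1, M_2, \ldots, M_t = M'
\]
in which each $M_{i+1}$ is obtained from $M_i$ by a single matching $l$-switch with $2 \le l \le k+1$. Applying Lemma \ref{mt} to each step then yields
\[
|f(G, M_{i+1}) - f(G, M_i)| \le l - 1 \le k,
\]
so the finite sequence of forcing numbers $f(G, M_0), f(G, M_1), \ldots, f(G, M_t)$ changes by at most $k$ at every step.

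Now suppose, toward the gap bound, that $a < b$ are two successive values of the forcing spectrum, meaning no integer strictly between $a$ and $b$ arises as a forcing number of any perfect matching. Choose $M$ with $f(G,M) = a$ and $M'$ with $f(G,M') = b$, and apply the construction above. Every value $f(G, M_i)$ lies in the spectrum, so it is either $\le a$ or $\ge b$. Since the sequence starts at $a$ and ends at $b$, there must exist an index $i$ with $f(G, M_i) \le a$ and $f(G, M_{i+1}) \ge b$; for this index the bounded-change estimate gives $b - a \le k$. Hence the number of integers missing between $a$ and $b$ is at most $k-1$, which is exactly the first claim.

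For the second claim, I would run the same argument but replace Theorem \ref{qpp} by Corollary \ref{qpp2}: when $G$ is $C_6$-, $C_8$-, \dots, and $C_{2(k+1)}$-free, the connecting sequence can be taken to use only matching $2$-switches, and then Lemma \ref{mt} forces $|f(G,M_{i+1}) - f(G,M_i)| \le 1$. Repeating the jump argument yields $b - a \le 1$ for any two successive spectrum values, so the forcing spectrum is a set of consecutive integers, i.e., continuous.

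There is no real obstacle, since the two ingredients (connectivity via short switches and per-switch Lipschitz bound on $f$) have already been established in the paper; the only care needed is the middle step where one argues that the forcing-number sequence along the chain must at some point jump from the $\le a$ side to the $\ge b$ side, which is immediate from the fact that every intermediate $f(G, M_i)$ lies in the spectrum and is therefore forbidden from taking values in $(a,b)$.
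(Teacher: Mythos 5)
Your proof is correct and follows exactly the route the paper intends: the paper derives this corollary by combining Theorem \ref{qpp} (resp.\ Corollary \ref{qpp2}) with Lemma \ref{mt}, and your argument simply makes explicit the standard "first index where the forcing number crosses from $\le a$ to $\ge b$" step, which is sound.
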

\begin{cor}If $G$ is a $C_6$-free bipartite graph of order $2n$ and with $F(G)\geq n-2$, then the forcing spectrum of $G$ is continuous.
\end{cor}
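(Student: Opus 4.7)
My plan is to split into the two possible values of $F(G)$ and reduce directly to results already established in the paper. Since $F(G)\ge n-2$ and $F(G)\le n-1$, either $F(G)=n-1$ or $F(G)=n-2$.

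First I would dispose of the case $F(G)=n-1$. By Corollary~\ref{cor2.2}, a bipartite graph of order $2n$ with $F(G)=n-1$ must be $K_{n,n}$. Since $K_{n,n}$ contains a $C_6$ whenever $n\ge 3$, the $C_6$-free hypothesis forces $n\le 2$, so $G\in\{K_{1,1},K_{2,2}\}$ and the forcing spectrum is a single integer, hence trivially continuous. (Alternatively, Theorem~\ref{cont} already guarantees continuity of the spectrum in this case without using the $C_6$-free hypothesis.)

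For the main case $F(G)=n-2$, I would simply invoke Corollary~\ref{cor} with $k=2$. Indeed, the hypothesis of that corollary for $k=2$ is that $G$ be $C_{2(k+1)}$-free, i.e.\ $C_6$-free, which is exactly what we are assuming. The conclusion is precisely that the forcing spectrum of $G$ is continuous.

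There is no real obstacle here, as all the substantive work was done in Theorem~\ref{qpp}, Corollary~\ref{qpp2}, and Lemma~\ref{mt}, which together yield Corollary~\ref{cor}. The only thing to be careful about is the $F(G)=n-1$ sub-case: Corollary~\ref{cor} is stated for $F(G)=n-k$, so to handle $F(G)\ge n-2$ one must separately dismiss the value $F(G)=n-1$, and the cleanest way is to use Corollary~\ref{cor2.2} together with the $C_6$-free condition (or to quote Theorem~\ref{cont} directly).
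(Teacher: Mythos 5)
Your proof is correct and follows essentially the same route as the paper, which treats this as an immediate consequence of Corollary~\ref{cor} with $k=2$ (the $C_6$-free hypothesis being exactly the $C_{2(k+1)}$-free condition for $k=2$). Your careful separate handling of the $F(G)=n-1$ case via Corollary~\ref{cor2.2} or Theorem~\ref{cont} is a valid and slightly more explicit treatment of a detail the paper leaves implicit.
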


\section{\normalsize The minimum forcing number}
In this section we want to give a lower bound on the minimum forcing number of a bipartite graph when its maximum forcing number is large. Let $G$ be a bipartite graph of order $2n$. If   $F(G)\geq n-1$, then $f(G)=n-1$ by Corollary \ref{cor2.2}. Next we obtain the following result.

\begin{thm}\label{lower}
Let $G$ be a bipartite graph of order $2n$ and with $F(G)=n-2$. Then $$f(G)\geq \lceil\frac{n}{2}\rceil-1.$$
\end{thm}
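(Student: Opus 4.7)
My plan is to prove the statement by induction on $n$, with the inductive step carried out by contradiction. The base cases $n\le 4$ are immediate because $F(G)=n-2\ge 1$ forces $G$ to have more than one perfect matching, so $f(G)\ge 1\ge\lceil n/2\rceil-1$.

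For the inductive step, fix a perfect matching $M^*$ with $f(G,M^*)=n-2$ and let $M'$ be any perfect matching of $G$. Suppose for contradiction that some minimum forcing set $S$ of $M'$ has $|S|\le\lceil n/2\rceil-2$, so $T:=M'\setminus S$ satisfies $|T|\ge\lfloor n/2\rfloor+2$ and $G[V(T)]$ has $T$ as its unique perfect matching. Uniqueness forces a triangular labeling on $G[V(T)]$: writing $T=\{a_ib_i:1\le i\le m\}$ with $m=|T|$, every edge $a_ib_j$ of $G[V(T)]$ satisfies $j\ge i$. Let $E_1:=M^*\cap E(G[V(T)])$ be the $M^*$-edges lying inside $V(T)$; a double count of $A$- and $B$-endpoints in $V(T)$ gives $|E_1|\ge 2m-n\ge 3$. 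I would first show $|E_1\cap T|\le 2$: otherwise Lemma \ref{2.1} applied to three $M^*$-edges of $E_1\cap T$ yields an $M^*$-alternating cycle $C$ in $G[V(T)]$ whose matching edges lie in $T$, so (as $T$ is a matching) the non-matching edges of $C$ are outside $T$, making $C$ a $T$-alternating cycle in $G[V(T)]$ and contradicting the uniqueness of $T$.

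Hence $E_1$ contains at least one off-diagonal edge $a_ib_j$ with $j>i$. The closing step is a case analysis on triples of $E_1$-edges: by Lemma \ref{2.1} each triple must support an $M^*$-alternating cycle in $G[V(T)]$, but under the constraint $j\ge i$ such a cycle can only be a 4-cycle $a_{i_s}b_{j_s}a_{i_t}b_{j_t}$ with $j_s\ge i_t$ or a 6-cycle with analogous ordering constraints; a combinatorial check will show that, for suitably chosen triples (for instance, two diagonal edges combined with an off-diagonal edge whose $A$-index exceeds both diagonals), no such cycle can exist, contradicting $F(G)=n-2$. The main obstacle is executing this case analysis uniformly across all configurations, especially the ``shift-like'' pattern $E_1=\{a_ib_{i+1}\}$ in which every triple happens to satisfy the triangular alternating-cycle condition. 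To dispatch that subcase I expect to invoke the inductive hypothesis on the subgraph $G[V(M^*\cap M')]$, which inherits $F\ge |M^*\cap M'|-2$ directly from the $F(G)=n-2$ hypothesis via Lemma \ref{2.1}, combined with a bookkeeping argument that the cycles of $M^*\oplus M'$ each contribute at least one edge to any forcing set of $M'$.
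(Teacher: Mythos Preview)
Your approach has a genuine gap at precisely the point you flag. The shift pattern $E_1=\{a_ib_{i+1}:1\le i\le r\}$ is fully compatible with every constraint you impose: for any triple, say $\{a_1b_2,a_2b_3,a_3b_4\}$, the $M^*$-alternating $4$-cycle $a_1b_2a_2b_3a_1$ (using the non-$M^*$ edges $a_2b_2$ and $a_1b_3$) respects the upper-triangular ordering on $G[V(T)]$ but is \emph{not} $T$-alternating (it meets $T$ only in the single edge $a_2b_2$), so Lemma~\ref{2.1} is satisfied without contradicting the uniqueness of $T$. Your inductive fallback does not close this. With $p:=|M^*\cap M'|$, your own bound $|E_1\cap T|\le 2$ gives $p\le|S|+2\le\lceil n/2\rceil$, and $p$ can be zero (e.g.\ when $M^*\oplus M'$ is a single Hamiltonian cycle), in which case $G[V(M^*\cap M')]$ is empty and the hypothesis is vacuous. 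Even when $p>0$, the correct part of your argument---that $S\cap(M^*\cap M')$ must force $M^*\cap M'$ in $H$---yields only $|S|\ge p-2$, while the cycle bookkeeping gives only that $|S|$ is at least the number of components of $M^*\oplus M'$, which can be $1$. Neither reaches $\lceil n/2\rceil-1$.

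The paper's proof is entirely different and avoids any case analysis. It pivots on $\delta(G)$, combining the known bound $f(G)\ge\delta(G)-1$ (Lemma~\ref{liu}) with a new structural bound $f(G)\ge n-1-\delta(G)$ (Corollary~\ref{cor3}); the maximum of these two is always at least $\lceil n/2\rceil-1$. For the second bound, one fixes a minimum-degree vertex $v_1$ and applies Lemma~\ref{2.1} to each triple of $M^*$-edges containing the edge at $v_1$: since $v_1$ has degree $1$ in that induced subgraph, the guaranteed $M^*$-alternating cycle lies on the other two edges, forcing the $M^*$-edges disjoint from $N(v_1)$ to span a \emph{complete} bipartite graph $G_2$ on $2(n-\delta(G))$ vertices. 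An alternating-path argument then shows that any induced subgraph of $G$ with a unique perfect matching has order at most $2\delta(G)+2$, so Lemma~\ref{fps} gives $f(G)\ge n-1-\delta(G)$. The structural ingredient you are missing is this large complete bipartite block anchored at a low-degree vertex.
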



To obtain such result, we need the following preliminary results.
\begin{lem}\cite{25} \label{fps}Let $G$ be a graph of order $2n$ and with a perfect matching. If the largest order of an induced subgraph of $G$ having a unique
perfect matching is $2k$, then  $f(G)\geq n-k$.
\end{lem}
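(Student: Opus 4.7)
The plan is to go directly from the definition of a forcing set to an induced subgraph with a unique perfect matching, and then apply the hypothesis to bound its order.

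First I would fix an arbitrary perfect matching $M$ of $G$ and let $S \subseteq M$ be a minimum forcing set of $M$, so $|S| = f(G,M)$. The key observation is that the induced subgraph $H := G - V(S) = G[V(G) \setminus V(S)]$ has $M \setminus S$ as its unique perfect matching. Indeed, $M \setminus S$ is clearly a perfect matching of $H$; and if $H$ had another perfect matching $N'$, then $S \cup N'$ would be a perfect matching of $G$ containing $S$ but different from $M$, contradicting the assumption that $S$ forces $M$.

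Next I would invoke the hypothesis: since $H$ is an induced subgraph of $G$ having a unique perfect matching, its order is at most $2k$, i.e.
\[
2n - 2|S| = |V(H)| \le 2k,
\]
which rearranges to $|S| \ge n-k$. Hence $f(G,M) \ge n-k$. Since $M$ was an arbitrary perfect matching, taking the minimum over all $M$ yields $f(G) \ge n-k$, as desired.

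There is no real obstacle here; the proof is essentially a tautology once one recognizes that "$S$ forces $M$" is the same as saying "$G - V(S)$ has a unique perfect matching". The only thing to be careful about is the direction of the implication in step one (the other direction is not needed): we only need that a minimum forcing set produces such an induced subgraph, not the converse.
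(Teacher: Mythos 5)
Your proof is correct. The paper itself states this lemma without proof (it is quoted from reference \cite{25}), but your argument --- that a minimum forcing set $S$ of a perfect matching $M$ makes $G-V(S)$ an induced subgraph with the unique perfect matching $M\setminus S$, whence $2n-2|S|\le 2k$ --- is exactly the standard one, and it is the same equivalence the paper relies on elsewhere (e.g.\ in the proof of Lemma \ref{mt}, where $G-V(S)$ having a unique perfect matching is used as the defining property of a forcing set).
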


\begin{cor}\label{cor3}Let $G$ be a bipartite graph of order $2n$ and with $F(G)= n-2$. Then $$f(G)\geq n-1-\delta(G).$$
\end{cor}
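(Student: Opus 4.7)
My approach is to invoke Lemma \ref{fps}: the Corollary follows once every induced subgraph $H$ of $G$ with a unique perfect matching is shown to satisfy $|V(H)| \le 2(\delta(G)+1)$. I would argue this by contradiction, assuming such an $H$ has $|V(H)| = 2k$ with $k \ge \delta(G)+2 \ge 3$. Writing the unique perfect matching as $M_H = \{u_1v_1, \ldots, u_kv_k\}$ in the standard peeling order for bipartite graphs with a unique perfect matching, so that $N_H(u_i) \subseteq \{v_1, \ldots, v_i\}$ for every $i$, makes $u_1$ have $H$-degree $1$ and, dually, $v_k$ have $H$-degree $1$. A first consequence is already available: $u_1$'s $\delta(G)-1$ remaining $G$-neighbours must lie in $B \setminus \{v_1,\ldots,v_k\}$, which has size $n-k$, so $k \le n - \delta(G) + 1$. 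But this weaker inequality is not enough, and the role of the hypothesis $F(G) = n-2$ is precisely to push it down to $k \le \delta(G) + 1$.

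The plan is to extend $M_H$ to a perfect matching $M$ of $G$ and then exploit $F(G)=n-2$. If $G - V(H)$ has a perfect matching $M_{H^c}$, setting $M := M_H \cup M_{H^c}$ gives a perfect matching whose forcing number is at most $n-k \le n - \delta(G) - 2$, because $M \setminus M_H$ alone forces $M$ (the vertices in $V(H)$ are then constrained to the unique perfect matching $M_H$ of $H$). Since $F(G) = n-2$, there is another perfect matching $M^*$ with $f(G, M^*) = n-2$; by Theorem \ref{qpp} it can be linked to $M$ by a finite sequence of matching $2$- and $3$-switches, while Lemma \ref{mt} controls how much the forcing number can drop per switch. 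A careful bookkeeping along this chain then exhibits some perfect matching $M'$ that shares at least three edges with $M_H$ but still has $f(G,M') = n-2$. Applying Lemma \ref{2.1} to $M'$ with those three shared edges as $T$ produces an $M'$-alternating cycle inside $G[V(T)] \subseteq H$, and by uniqueness of the perfect matching of $H$ this cycle yields an $M_H$-alternating cycle in $H$, the desired contradiction.

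The main obstacle is the extension step, i.e., showing that $G - V(H)$ has a perfect matching. I expect to handle this by a Hall-type argument on the bipartite graph $G - V(H)$: a failure of Hall's condition would yield a subset $S \subseteq A \setminus A_H$ with too few neighbours in $B \setminus B_H$, but combining this with the peeling structure of $H$ (which severely restricts how many $G$-edges from $V(H)$ can escape into $V(G) \setminus V(H)$) would force some vertex to have $G$-degree below $\delta(G)$, a contradiction. Once the extension is secured, the remaining bookkeeping along the switch chain from $M^*$ to $M$ is routine given Theorem \ref{qpp} and Lemma \ref{mt}.
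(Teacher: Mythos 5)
Your overall frame is the same as the paper's: invoke Lemma \ref{fps} after showing that every induced subgraph $H$ of $G$ with a unique perfect matching has at most $2\delta(G)+2$ vertices. But the way you propose to prove that bound has two genuine gaps, and the second one looks fatal to the route you chose. First, the extension step: you need $G-V(H)$ to have a perfect matching, and you only gesture at a ``Hall-type argument'' combined with the peeling structure of $H$. This is not a routine verification --- for a general bipartite graph the complement of an induced subgraph with a unique perfect matching need not have a perfect matching at all, so any proof must genuinely use $F(G)=n-2$, and you have not indicated how. Second, and more seriously, the ``careful bookkeeping'' along the switch chain is asserted, not performed, and the two properties you need to hold simultaneously pull in opposite directions. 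To apply Lemma \ref{2.1} with $k=2$ you need a perfect matching $M'$ with $f(G,M')\geq n-2$ that contains at least three edges of $M_H$. But in your chain the endpoint $M^{*}$ has forcing number $n-2$ and no controlled intersection with $M_H$, while the endpoint $M\supseteq M_H$ has forcing number at most $n-k<n-2$; Lemma \ref{mt} only bounds how fast the forcing number changes per switch and provides no mechanism for locating an intermediate matching that is simultaneously ``close to $M^{*}$ in forcing number'' and ``close to $M$ in its intersection with $M_H$.'' Indeed, the very fact that $f(G,M)\leq n-k$ shows that matchings containing all of $M_H$ have small forcing number, so there is no reason a maximum-forcing matching should pick up three edges of $M_H$.

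The paper avoids both issues by working directly with a perfect matching $M_s$ attaining $f(G,M_s)=n-2$ and a minimum-degree vertex $v_1$ with neighbours $u_1,\dots,u_{\delta(G)}$: applying Lemma \ref{2.1} to each triple $\{u_1v_1,u_iv_i,u_jv_j\}$ with $i,j>\delta(G)$ (where $v_1$ has degree $1$, so the forced alternating cycle must avoid $u_1v_1$) shows that $G_2:=G[\{u_{\delta(G)+1},v_{\delta(G)+1},\dots,u_n,v_n\}]$ is complete bipartite. Then any induced subgraph with a unique perfect matching can contain at most one matching edge inside $G_2$, and a path-tracing argument charges each matching edge crossing between $G_1:=G-V(G_2)$ and $G_2$ to a distinct unused vertex of $G_1$, giving the $2\delta(G)+2$ bound with no extension of $M_H$ and no switch chain. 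I would recommend abandoning the chain argument and establishing the complete-bipartite structure of $G_2$ first; your correct preliminary observation that the peeling order forces $k\leq n-\delta(G)+1$ is not needed once that structure is in hand.
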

\begin{proof}
Let $M_s=\{u_1v_1,u_2v_2,\dots,u_nv_n\}$ be a perfect matching of $G$ with $f(G,M_s)=n-2$. Without loss of generality, we assume that $v_1$ is a vertex of the minimum degree. Since $G$ has a perfect matching, the degree of $v_1$ is at least 1. We may suppose that $\{u_1,u_2,\dots,u_{\delta(G)}\}$ are all neighbors of $v_1$. For $\delta(G)+1\leq i< j\leq n$, since $v_1$ is a vertex of degree 1 in the induced subgraph $G[\{u_1,v_1,u_i,v_i,u_j,v_j\}]$, by Lemma \ref{2.1}, $\{u_iv_j,v_iu_j\}$ is contained in $E(G)$. Let $G_2:=G[\{u_{\delta(G)+1},v_{\delta(G)+1},\dots,u_{n},v_{n}\}]$ and $G_1:=G-V(G_2)$. Then $G_2$ is a complete bipartite graph.

Let $G'$ be an induced subgraph of $G$ with a unique perfect matching $M'$. Then $G'$ is a bipartite graph. We will prove that the order of $G'$ is at most $2\delta(G)+2$.
If $G'$ contains no vertices of $u_{\delta(G)+1}$, $u_{\delta(G)+2}$, $\dots$, $u_{n}$, then $G'$ contains at most $2\delta(G)$ vertices. Otherwise, we suppose that the intersection of $\{u_{\delta(G)+1},u_{\delta(G)+2},\dots,u_{n}\}$ (resp. $\{v_{\delta(G)+1},v_{\delta(G)+1},\dots,v_{n}\}$) and the set of all endvertices of edges in $M'\cap E(V(G_1),V(G_2))$ is denoted by $\{y_1,y_2,\dots,y_b\}$ (resp. $\{x_1,x_2,\dots,x_w\}$). Thus there are $w+b$ edges in $M'\cap E(V(G_1),V(G_2))$ in total. Next we will find $b$ $M'$-alternating paths in $G$ separately starting with $y_1$, $y_2$, \dots, $y_b$ and $w$ $M'$-alternating paths separately starting with $x_1$, $x_2$, \dots, $x_w$ and prove that all such paths ending up with some $M'$-unsaturated vertices of $G_1$.

Since $y_l$ is $M'$-saturated, say $y_lv_i\in M'$, we have $v_i\in V(G_1)$. Along the edge $u_iv_i$, if $u_i$ is also $M'$-saturated and $u_iv_j\in M'$. Then $v_j\neq x_t$ for any $1 \leq t\leq w$. Otherwise, $v_j$ is some $x_t$. Then $y_lv_iu_ix_ty_l$ is an $M'$-alternating cycle, which contradicts that $M'$ is unique perfect matching of $G'$. Hence $v_j\in V(G_1)$. Continuing the process, we can find an $M'$-alternating path ending up with an $M'$-unsaturated vertex of $G_1$
 which is denoted by $\overline{y_l}$ (see an example in Fig. \ref{eps2}).
\begin{figure}[h]
\centering
\includegraphics[height=3.2cm,width=12cm]{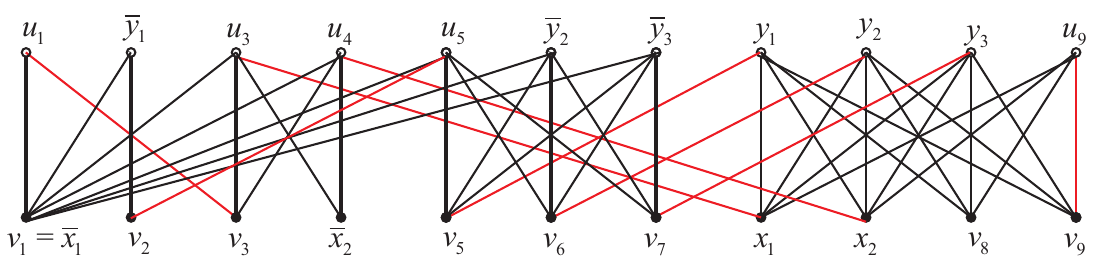}
\caption{\label{eps2}Illustration of $M'$-alternating paths in the proof of Corollary \ref{cor3}.}
\end{figure}

By a similar argument, we can find $w$ $M'$-alternating paths starting with $x_t$ and ending up with some $M'$-unsaturated vertex $\overline{x_t}$ of $G_1$ for $1\leq t\leq w$. Hence $G_1$ has at least $w+b$ vertices not in $G'$, and $G_1$ contains at most $2\delta(G)-w-b$ vertices of $G'$. Since $G'$ contains no $M'$-alternating cycles, $G_2$ contains at most one edge of $M'$ and thus $G_2$ contains at most $w+b+2$ vertices of $G'$. Thus, $G'$ has the order at most $2\delta(G)+2$.
By Lemma \ref{fps}, we have $f(G)\geq n-[\delta(G)+1]=n-1-\delta(G)$.
\end{proof}
\begin{lem}\cite{liu}\label{liu}
If $G$ is a bipartite graph with a perfect matching, then $f(G)\geq \delta(G)-1$.
\end{lem}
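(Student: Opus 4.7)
The plan is to exploit the classical fact that a bipartite graph with a \emph{unique} perfect matching must contain a vertex of degree $1$, applied to the residual graph $G-V(S)$ for a minimum forcing set $S$.

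First I would set up the right object to analyse. Let $M$ be a perfect matching of $G$ with $f(G,M)=f(G)$, let $S\subseteq M$ be a forcing set with $|S|=f(G,M)$, and put $G':=G-V(S)$. Since $S$ is a forcing set, $G'$ has a unique perfect matching $M\setminus S$ (any other perfect matching of $G'$ could be combined with $S$ to give a perfect matching of $G$ distinct from $M$ and containing $S$). Note also that $S\subseteq M$ is itself a matching, so if $G$ has bipartition $(A,B)$ then $|V(S)\cap A|=|V(S)\cap B|=|S|$.

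Next I would invoke the key structural fact: \emph{every bipartite graph with a unique perfect matching has a vertex of degree $1$}. The standard proof is to orient each matching edge from $B$ to $A$ and each non-matching edge from $A$ to $B$; $M$-alternating cycles correspond to directed cycles, so uniqueness of the perfect matching forces the digraph to be acyclic, and any source must be a $B$-vertex that is incident only to its matching edge. Applied to $G'$ with its unique perfect matching $M\setminus S$, this produces a vertex $v\in V(G')$ with $d_{G'}(v)=1$; say without loss of generality $v\in A$.

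The conclusion is then a short count. Every neighbor of $v$ in $G$ lies in $B$ because $G$ is bipartite, and exactly one such neighbor lies in $V(G')$ (the match of $v$ in $M\setminus S$). Hence at least $d_G(v)-1\geq \delta(G)-1$ neighbors of $v$ lie in $V(S)\cap B$, which gives
\[
|S|=|V(S)\cap B|\geq \delta(G)-1,
\]
and therefore $f(G)=|S|\geq \delta(G)-1$. The only nontrivial ingredient is the degree-$1$ vertex lemma for bipartite graphs with a unique perfect matching; everything else is bookkeeping using the bipartite structure and the definition of a forcing set, so I do not anticipate a real obstacle beyond correctly identifying the degree-$1$ vertex of $G'$ and translating its missing neighbors into elements of $V(S)$.
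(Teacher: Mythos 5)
Your proof is correct. Note that the paper does not prove this lemma at all -- it is quoted from the cited reference of Liu and Zhang -- so there is no in-paper argument to compare against; your route (a minimum forcing set $S$ makes $G-V(S)$ a bipartite graph with a unique perfect matching, which must have a degree-$1$ vertex whose missing $\delta(G)-1$ neighbours all land in one side of $V(S)$) is the standard one and is sound, the only point worth making explicit being that $G-V(S)$ is nonempty, which holds since $M$ minus any single edge is already a forcing set and hence $|S|\leq |M|-1$.
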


\noindent {\bf Proof of Theorem \ref{lower}} By Corollary \ref{cor3} and Lemma \ref{liu}, we have $$f(G)\geq \text{max}\{\delta(G)-1, n-1-\delta(G)\}.$$ If $\delta(G)\geq \frac{n}{2}$, then we have $\delta(G)-1\geq n-1-\delta(G)$ and thus $f(G)\geq \delta(G)-1\geq\frac{n}{2}-1$. Otherwise, we have $\delta(G)-1<n-1-\delta(G)$ and thus $f(G)\geq n-1-\delta(G)>\frac{n}{2}-1$. Thus, Theorem \ref{lower} holds. \hfill $\square$\\

In general we propose the following problem.
\begin{prob} \label{fps5}
Let $G$ be a bipartite graph of order $2n$ and with $F(G)\geq n-k$ for $1\leq k\leq n-1$. Does    $f(G)\geq \lceil\frac{n}{k}\rceil-1$ always hold?
\end{prob}

We also find this lower bound can be attained.

\begin{remark}For $n\equiv 1$ (mod $k$), let $H$ be a graph obtained from the disjoint union of $k-1$ copies of $K_{\lfloor\frac{n}{k}\rfloor,\lfloor\frac{n}{k}\rfloor}$ and one copy of $K_{\lceil\frac{n}{k}\rceil,\lceil\frac{n}{k}\rceil}$ and then adding a maximum matching between $A_i$ and $B_{i+1}$ for $1\leq i\leq k$, where $A_i$ and $B_i$ are the  partite sets of the $i$-th complete bipartite graph and $B_{k+1}=B_1$. Then $F(H)= n-k$ and $f(H)=\lceil\frac{n}{k}\rceil-1$.
\end{remark}
\begin{proof}
Let $M$ be a perfect matching of $H$ with $A_i$ matched to $B_i$ for $1\leq i\leq k$. Since any set of $k+1$ edges in $M$ contain two edges from the same complete bipartite graph and hence the subgraph induced by endvertices of such $k+1$ edges contains an $M$-alternating cycle. By Lemma \ref{2.1}, we have $F(H)\geq f(H,M)\geq n-k$. Next we prove that $f(H,F)\leq n-k$ for any perfect matching $F$ of $H$.  Let $T=\cup_{i=1}^kE(A_i,B_{i+1})$. If $F\cap T=\emptyset$, then let $S:=\cup_{i=1}^{k}\{e_i\}$, where $e_i\in F$ is one edge from $i$'th copy of $K_{\lfloor\frac{n}{k}\rfloor,\lfloor\frac{n}{k}\rfloor}$ for $1\leq i\leq k-1$, and $e_k\in F$ is the unique edge in $K_{\lceil\frac{n}{k}\rceil,\lceil\frac{n}{k}\rceil}$ such that it is not adjacent to any edges of $E(A_k,B_{1})$. Then $F\setminus S$ is a forcing set of $F$, and $f(H,F)\leq |F\setminus S|= n-k$. If $F\cap T\neq\emptyset$, then $F\cap E(A_i,B_{i+1})\neq \emptyset$ for $1\leq i\leq k$ since each complete bipartite graph is of even order. Let $e'_i$ be an edge of $F\cap E(A_i,B_{i+1})$ for $1\leq i\leq k-1$ and $e'_k\in F$ is an edge of $K_{\lceil\frac{n}{k}\rceil,\lceil\frac{n}{k}\rceil}$. Let $S':=\cup_{i=1}^{k}\{e'_i\}$. Then $F\setminus S'$ is a forcing set of $F$, and $f(H,F)\leq |F\setminus S'|= n-k$. By the arbitrariness of $F$, we have $F(H)\leq n-k$.

The union of these maximum matchings saturates all vertices but two of $K_{\lceil\frac{n}{k}\rceil,\lceil\frac{n}{k}\rceil}$. Since the two unsaturated vertices are adjacent,  these maximum matchings can be extended to a perfect matching $M'$ of $H$. Since fixing the edges between $A_1$ and $B_2$ fixes the perfect matching $M'$, we have $f(H)\leq f(H,M')\leq \lceil\frac{n}{k}\rceil-1$.
Note that $\delta(H)=\lceil\frac{n}{k}\rceil$. By Lemma \ref{liu}, $f(H)\geq \delta(H)-1=\lceil\frac{n}{k}\rceil-1$. Therefore, $f(H)=\lceil\frac{n}{k}\rceil-1$.
\end{proof}

\begin{remark}
    For  non-bipartite graphs the lower bound in Problem \ref{fps5} is too larger. Take graphs $H_l$ of order $2n$ and with $F(H_l)=n-1$ in \cite{47} for example where $1\leq l\leq \frac{n-1}{2}$. By Lemma 5.6 in \cite{47}, we obtain that $f(H_l)=n-l-1\leq n-2< n-1$.
\end{remark}
Finally we  discuss the distribution of minimum forcing number for such classes of bipartite graphs. For integers $0\leq l\leq \lfloor\frac{n}{2}\rfloor-1$ and $n\geq 2$, let $G_l$ be a bipartite graph with  a perfect matching $M_s=\{u_iv_i|i=1,2,\dots,n\}$  such that $G^1=G_l[\{u_i,v_i|i=1,2,\dots,\lceil\frac{n}{2}\rceil+l\}]$ and $G^2=G_l-V(G^1)$ are two complete bipartite graphs, and the edges between  $G^1$ and $G^2$ are as follows ($x=2l+3$ if $n$ is odd and $x=2l+2$ otherwise) (see Fig. \ref{eps41}): $$\{u_xv_{\lceil\frac{n}{2}\rceil+l+1},v_xu_{\lceil\frac{n}{2}\rceil+l+1},u_{x+1}v_{\lceil\frac{n}{2}\rceil+l+2},v_{x+1}u_{\lceil\frac{n}{2}\rceil+l+2},\dots,u_{\lceil\frac{n}{2}\rceil+l}v_{n-1},
v_{\lceil\frac{n}{2}\rceil+l}u_{n-1}\}.$$ Then $G_l$ is a bipartite graph with $F(G)=n-2$.
\begin{figure}[h]
\centering
\includegraphics[height=4cm,width=16cm]{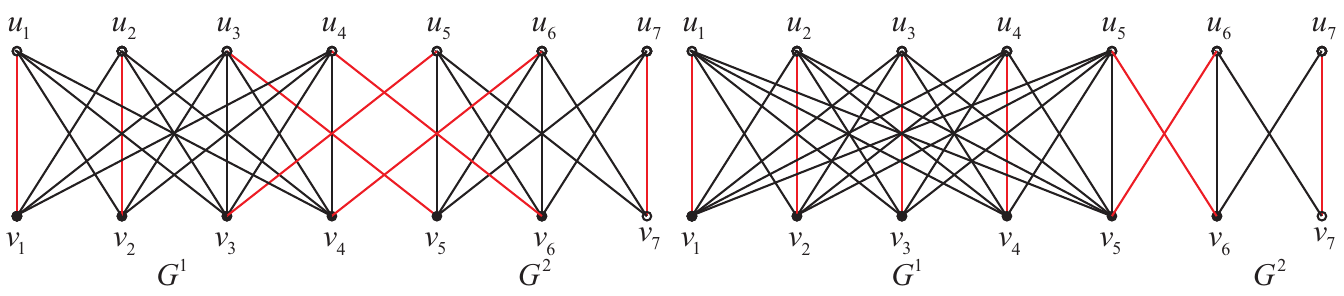}
\caption{\label{eps41}Examples of $G_l$ where $n=7$ and $l=0$ (left) and 1 (right).}
\end{figure}

\begin{lem}\label{lem3}For $0\leq l\leq \lfloor\frac{n}{2}\rfloor-1$, we have $f(G_l)=\lceil\frac{n}{2}\rceil+l-1$.
\end{lem}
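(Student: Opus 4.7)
The plan is to prove $f(G_l)=\lceil \frac{n}{2}\rceil+l-1$ by matching upper and lower bounds.

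For the lower bound, I would first compute $\delta(G_l)$. The vertices $u_n$ and $v_n$ lie in $G^2$ and are the only vertices of $G^2$ that receive no cross edge to $G^1$, so each has exactly $\lfloor \frac{n}{2}\rfloor-l$ neighbors (the opposite side of $G^2$). Every other vertex of $G^2$ gains one extra cross neighbor, and every vertex of $G^1$ already sees at least $\lceil \frac{n}{2}\rceil+l$ neighbors inside $G^1$. Since $\lceil \frac{n}{2}\rceil+l\geq \lfloor \frac{n}{2}\rfloor-l$ for $l\geq 0$, this yields $\delta(G_l)=\lfloor \frac{n}{2}\rfloor-l$. Because $F(G_l)=n-2$, Corollary~\ref{cor3} then gives $f(G_l)\geq n-1-\delta(G_l)=\lceil \frac{n}{2}\rceil+l-1$.

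For the upper bound, I would exhibit a perfect matching $M'$ together with a forcing set $T\subseteq M'$ of the target size. Setting $d:=\lceil \frac{n}{2}\rceil+l+1-x$, define
$$M':=\{u_iv_i:1\leq i\leq x-1\}\cup\{u_iv_{i+d},\,v_iu_{i+d}:x\leq i\leq \lceil \tfrac{n}{2}\rceil+l\}\cup\{u_nv_n\},$$
$$T:=\{u_iv_i:1\leq i\leq x-2\}\cup\{u_iv_{i+d}:x\leq i\leq \lceil \tfrac{n}{2}\rceil+l\}.$$
A direct count gives $|T|=(x-2)+(\lceil \frac{n}{2}\rceil+l-x+1)=\lceil \frac{n}{2}\rceil+l-1$, and $M'$ is easily seen to be a perfect matching of $G_l$.

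The decisive step is a short cascade showing that $T$ forces $M'$. After removing the vertices saturated by $T$, the unfixed vertices are $u_{x-1}$, $u_{\lceil n/2\rceil+l+1},\dots,u_n$ on one side, and $v_{x-1}$, $v_x,\dots,v_{\lceil n/2\rceil+l}$, $v_n$ on the other. First, $u_n$ has no cross edge and every $v_j$ for $\lceil \frac{n}{2}\rceil+l+1\leq j\leq n-1$ is already saturated by $T$, so $u_nv_n$ is forced. Next, each $u_j$ with $\lceil \frac{n}{2}\rceil+l+1\leq j\leq n-1$ has lost all its intra-$G^2$ neighbors and retains only the single cross neighbor $v_{j-d}\in G^1$, forcing $v_{j-d}u_j$; as $j$ sweeps this range this recovers exactly $\{v_iu_{i+d}:x\leq i\leq \lceil \frac{n}{2}\rceil+l\}$. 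Finally $u_{x-1}$ and $v_{x-1}$ are the last two unmatched vertices, so $u_{x-1}v_{x-1}$ is forced. Hence $T$ extends uniquely to $M'$, giving $f(G_l)\leq |T|=\lceil \frac{n}{2}\rceil+l-1$.

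The main obstacle is mostly bookkeeping. The value of $x$ silently encodes the parity of $n$, and in the degenerate regime $l=\lfloor \frac{n}{2}\rfloor-1$ the cross-edge set is empty, so $G_l$ becomes a disjoint union of two complete bipartite graphs; the construction still works with the middle family empty and $M'$ reducing to $M_s$, and the cascade degenerates to the standard fact that $f(K_{m,m},M)=m-1$ for any perfect matching $M$. Once these boundary cases are dispatched, the two bounds meet exactly at $\lceil \frac{n}{2}\rceil+l-1$.
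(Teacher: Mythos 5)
Your proof is correct and follows essentially the same route as the paper: the lower bound comes from computing $\delta(G_l)=\lfloor\frac{n}{2}\rfloor-l$ and invoking Corollary~\ref{cor3}, and the upper bound uses the same perfect matching (all cross edges together with the leftover $M_s$-edges) with a forcing set of size $\lceil\frac{n}{2}\rceil+l-1$. The only differences are cosmetic — you omit a symmetric choice of edges from the matching and spell out the forcing cascade that the paper leaves as an assertion of uniqueness.
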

\begin{proof}Note that the degree of $u_n$ is $n-(\lceil\frac{n}{2}\rceil+l)=\lfloor\frac{n}{2}\rfloor-l$. Since $u_n$ is a vertex of $G_l$ with the minimum degree, $\delta(G_l)=\lfloor\frac{n}{2}\rfloor-l$. By Corollary \ref{cor3}, we have $$f(G_l)\geq n-1-\delta(G_l)=n-1-\lfloor\frac{n}{2}\rfloor+l=\lceil\frac{n}{2}\rceil+l-1.$$

Let $M=E(V(G^1),V(G^2))\cup\{u_1v_1,\dots,u_{x-1}v_{x-1},u_nv_n\}$ (shown in Fig. \ref{eps41} where red lines form $M$). Then $M$ is a perfect matching of $G_l$.
Since $$G_l[\{u_1,v_1,u_x,v_{\lceil\frac{n}{2}\rceil+l+1},u_{x+1},v_{\lceil\frac{n}{2}\rceil+l+2},\dots,u_{\lceil\frac{n}{2}\rceil+l},v_{n-1},u_n,v_n\}]$$ has a unique perfect matching, $\{u_2v_2,\dots,u_{x-1}v_{x-1},v_xu_{\lceil\frac{n}{2}\rceil+l+1},v_{x+1}u_{\lceil\frac{n}{2}\rceil+l+2},\dots,v_{\lceil\frac{n}{2}\rceil+l}u_{n-1}\}$ is a forcing set of $M$. So we have $f(G_l)\leq f(G_l,M)\leq \lceil\frac{n}{2}\rceil+l-1$.
\end{proof}

By Lemma \ref{lem3}, we can obtain the following result immediately.
\begin{cor}All minimum forcing numbers of the bipartite graphs $G$ of order $2n$ and with $F(G)=n-2$ form an integer interval $[\lceil\frac{n}{2}\rceil-1,n-2]$.
\end{cor}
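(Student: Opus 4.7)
The plan is to combine three ingredients already assembled in the paper: the lower bound from Theorem \ref{lower}, the trivial upper bound $f(G)\leq F(G)=n-2$, and the realization family $\{G_l\}$ together with Lemma \ref{lem3}. First I would observe that the two bounds pin down the range: for every bipartite graph $G$ of order $2n$ with $F(G)=n-2$, Theorem \ref{lower} gives $f(G)\geq \lceil n/2\rceil-1$, and since the minimum forcing number never exceeds the maximum, $f(G)\leq n-2$. So the set of minimum forcing numbers under consideration is contained in $[\lceil n/2\rceil-1,n-2]$.

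Next I would verify that every integer in this interval is actually attained. This is where the family $G_l$ introduced just before Lemma \ref{lem3} does all the work: the paragraph preceding Lemma \ref{lem3} asserts $F(G_l)=n-2$ for every admissible $l$, and Lemma \ref{lem3} itself gives $f(G_l)=\lceil n/2\rceil+l-1$. Letting $l$ sweep through $0,1,\dots,\lfloor n/2\rfloor-1$, the value $\lceil n/2\rceil+l-1$ sweeps through consecutive integers from $\lceil n/2\rceil-1$ up to
\[
\lceil n/2\rceil+\lfloor n/2\rfloor-2 \;=\; n-2,
\]
so every integer in $[\lceil n/2\rceil-1,n-2]$ occurs as $f(G_l)$ for some $l$, completing the realization step.

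There is essentially no main obstacle left: all the substantive work has been done in Theorem \ref{lower} (lower bound) and Lemma \ref{lem3} (construction). The only tiny point I would double-check is the endpoint arithmetic $\lceil n/2\rceil+\lfloor n/2\rfloor-2=n-2$, which holds for both parities of $n$, and the fact that the admissible range $0\leq l\leq \lfloor n/2\rfloor-1$ is nonempty precisely when $n\geq 2$, which is in line with the hypothesis that $G$ has order $2n$ with $F(G)=n-2$ (forcing $n\geq 2$). After noting these, the corollary follows by a single sentence stitching the two containments together.
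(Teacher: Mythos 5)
Your proposal is correct and follows exactly the route the paper intends: the lower bound from Theorem \ref{lower}, the trivial containment $f(G)\leq F(G)=n-2$, and the realization of every intermediate value by the family $G_l$ via Lemma \ref{lem3}, with the endpoint check $\lceil n/2\rceil+\lfloor n/2\rfloor-2=n-2$. Nothing is missing.
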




\smallskip
\smallskip

\noindent{\normalsize \textbf{Acknowledgments}}
\smallskip
\smallskip

This work is supported by National Natural Science Foundation of China (Grant No. 12271229), Natural Science Foundation of Inner Mongolia Autonomous Region of China (Grant No. 2025QN01008), and first-class discipline research special project (Grant No. YLXKZX-NGD-055).



\end{document}